\newtheorem{theorem}{Theorem}[section]
\newtheorem{lemma}[theorem]{Lemma}
\newtheorem{proposition}[theorem]{Proposition}
\theoremstyle{definition}
\newtheorem{definition}[theorem]{Definition}
\newtheorem{notation}[theorem]{Notation}
\newtheorem{remark}[theorem]{Remark}
\theoremstyle{remark}
\newtheorem*{claim1}{Claim 1}
\newtheorem*{claim2}{Claim 2}
\newtheorem*{claim3}{Claim 3}
\newtheorem*{claimpf}{Proof of Claim}
\newcommand{\bN}{ {\mathbb N} }
\newcommand{\To}{\Rightarrow}
\newcommand{\p}{ {\mathbf P} }
\newcommand{\bY}{ {\mathbb Y} }
\newcommand{\cut}{ {K} }
\newcommand{\depth}{ {\text{Depth}} }
\newcommand{\ahalf}{ \frac{1}{2} }
\newcommand{\halfchithree}{\frac{\chi_3}{2}}
\begin{document}

$\ $

\begin{center}
{\bf\Large Stabilization time for a type of evolution}

\vspace{6pt}

{\bf\Large on binary strings}

\vspace{20pt}

{\large      Jacob Funk  \footnote{Research supported by an USRA 
                                   Award from NSERC, Canada.}
\hspace{2cm} Mihai Nica  \footnote{Research supported by a 
                                    MacCracken fellowship from New York University}
\hspace{2cm} Michael Noyes}
\end{center}

\vspace{10pt}

\begin{abstract}
We consider a type of evolution on $\{0,1\}^{n}$ which occurs in discrete steps whereby at each step, we replace every occurrence of the substring $``01"$ by $``10"$. After at most $n-1$ steps we will reach a string of the form $11\cdots1100\cdots00$, which we will call a $``$stabilized$"$ string and we call the number of steps required the $``$stabilization time$"$. If we choose each bit of the string independently to be a $1$ with probability $p$ and a $0$ with probability $1-p$, then the stabilization time of a string in $\{0,1\}^{n}$ is a random variable with values in $\{0,1,\ldots n-1\}$. We study the asymptotic behavior of this random variable as $n\to\infty$ and we determine its limit distribution after suitable centering and scaling . When $p \neq \ahalf$, the limit distribution is Gaussian. When $p = \ahalf$, the limit distribution is a $\chi_3$ distribution. We also explicitly compute the limit distribution in a threshold setting where $p=p_n$ varies with $n$ given by $p_n = \ahalf + \frac{\lambda / 2}{\sqrt{n}}$ for $\lambda > 0$ a fixed parameter. This analysis gives rise to a one parameter family of distributions that fit between a $\chi_3$ and a Gaussian distribution. The tools used in our arguments are a natural interpretation of strings in $\{0,1\}^{n}$ as Young diagrams, and a connection with the known distribution for the maximal height of a Brownian path on $[0,1]$.
\end{abstract}  

\section{Introduction}

For $n \in \bN$ and $p \in (0,1)$ let $\Omega_n^p$ denote
the probability space consisting of strings in $\{ 0,1 \}^n$, 
  where each bit  is chosen independently to be a
1 with probability $p$ or a 0 with probability $1-p$. We
consider the following kind of `evolution' for 
$\omega \in \Omega_n^p$: replace every occurrence of the substring $``01"$ with $``10"$.
By doing so, new instances
of $``01"$ may be created (for instance  $0101 \mapsto 1010$ 
creates a $``01"$ in the middle). We repeat this process
until we reach a string of the form
$11 \cdots 1100 \cdots 00$.

A concrete example: say we have $n=8$ and we start with the string
$\omega = 01101011$. Then our evolution produces the following strings before stabilizing:
\[
01101011 \mapsto 10110101 \mapsto 11011010
\mapsto 11101100 \mapsto 11110100 \mapsto 11111000,
\]
and stabilizes after 5 iterations because there are no more instances of $``01"$ to be found.

This evolution has a cute interpretation as a line of confused soldiers, see \cite{Kv80}. There is also a way to view the problem in terms of particles whose motion is restricted to one dimension. Imagine that each $1$ in the string is a particle that would like to move to the left-hand side of the string and that each $0$ is an empty space. At every iteration, if a particle has an empty space to its left, it will move into that space. This is exactly the replacement rule $01\to10$. Equally well, one can think of the $0$'s as the particles which are trying to move as far right as they can and the $1$'s as open spaces. The process will stabilize when all of the particles have moved as far as they can go. In this guise we have a kind of deterministic analogue of certain "exclusion processes" (see for instance \cite{L2005} Chapter 8) except in our case the initial condition is random, but the evolution is deterministic. 

It is elementary to show that this process must stabilize after at most $n-1$ iterations, and that afterwards we will obtain a string of the form $11 \cdots 1100 \cdots 00$.  The number of iterations until we reach such a final configuration is a random variable on the probability space $\Omega_{n}^{p}$. We will call this random variable the \emph{stabilization time} and denote it by $T_{n}^{p} : \Omega_{n}^{p} \to \{0,1,\ldots,n-1\}$. Going back to our concrete example above, we have $T_{8}^{p} \left(01101011\right) = 5$.

In this paper we will examine the limit distribution for the random 
variable $T_{n}^{p}$ in the limit $n\to\infty$ and for varying values of $p$.  Because of symmetry between the zeros and ones in the string, we will consider only the case that $p \geq \ahalf$. The case $p \leq \ahalf$ is completely complementary by the replacement $p \leftrightarrow 1-p$. One of the points of interest is the fact that the limit distribution in the case $p=\frac{1}{2}$ is qualitatively different than for  $p \neq \ahalf$.  Motivated by this, we will also consider the case where $p_n$ depends on $n$ and is given by:

 \[
 p_n = \ahalf + \frac{\lambda / 2}{\sqrt{n}}
 \]
  Where $\lambda > 0$ is a positive parameter. We call this the ``\emph{threshold setting}" because it is somewhere between the case $p = \ahalf$ and $p \neq \ahalf$.  

\begin{theorem}
\label{thm:1.1}
We have the following weak limits for the distribution of the random variable $T_{n}^{p}$ in the limit $n\to\infty$:

In the case $p>\frac{1}{2}$:

\[
\frac{T_{n}^{p}-pn}{\sqrt{n}}\Rightarrow N(0,p(1-p))
\]

In the case $p=\frac{1}{2}$:

\[
\frac{T_{n}^{\ahalf}-\frac{1}{2}n}{\sqrt{n}}\Rightarrow \halfchithree
\]

 In the threshold setting $p_n = \ahalf + \frac{\lambda / 2}{\sqrt{n}}$:

\[
\frac{T_{n}^{p_n}-\ahalf n}{\sqrt{n}}\Rightarrow \nu_\lambda
\]

\noindent In the above limits, $N\left(0,p(1-p)\right)$ is a mean zero Gaussian
variable with variance $p(1-p)$, and $\halfchithree \sim \ahalf \sqrt{Z_1^2 + Z_2^2 + Z_3^2}$ is half of the Euclidean norm of a vector of three independent standard $N(0,1)$ Gaussian variables. This has density:
\[
d\halfchithree(x)= \frac{8\sqrt{2}}{\sqrt{\pi}}x^{2}e^{-2x^{2}}dx \text{ for } x>0
\]
 Finally, $\nu_\lambda$ is a random variable supported on the positive real axis  that depends on the parameter $\lambda$, whose density we find explicitly:
\[
d\nu_\lambda(x)= \frac{4\sqrt{2}}{\lambda \sqrt{\pi}} e^{-\ahalf \lambda^2} \sinh (2 \lambda x) x e^{-2x^{2}} dx \text{ for } x>0
\]

\end{theorem}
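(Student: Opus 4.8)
The plan is to reduce $T_n^p$ to an explicit statistic of a random walk and then to combine Donsker's invariance principle with the reflection principle.

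First I would set up the standard bijection between $\{0,1\}^n$ and Young diagrams $\lambda$ contained in the $S\times(n-S)$ rectangle, where $S=\#\{i:\omega_i=1\}$: reading $\omega$ from left to right, let a $1$ be a south step and a $0$ an east step of a lattice path from the top-left to the bottom-right corner of the rectangle, and let $\lambda$ be the region cut off. Two combinatorial facts then have to be verified. (i) Performing $01\mapsto 10$ at an occurrence of ``$01$'' in $\omega$ corresponds exactly to deleting one removable (outer) corner cell of $\lambda$; since occurrences of ``$01$'' are pairwise disjoint, one step of the evolution deletes all removable corners of $\lambda$ at once. (ii) If a Young diagram is peeled by repeatedly deleting all of its removable corners, then the cell in row $i$, column $j$ disappears after exactly as many sweeps as the number of cells in the longest monotone (down-and-right) lattice path inside $\lambda$ that starts at that cell; in particular the corner cell $(1,1)$ is the last to go, and it takes $\max\{i+j-1:(i,j)\in\lambda\}$ sweeps — this is a short induction on the cell poset. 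Translating back, if $a_1<\cdots<a_S$ are the positions of the $1$'s in $\omega$, this yields the closed form
\[
 T_n^p(\omega)\;=\;\max\bigl\{\,a_i+S-2i\;:\;1\le i\le S,\ a_i>i\,\bigr\},
\]
with the convention that it is $0$ when every $1$ already sits in its final slot. The restriction ``$a_i>i$'' is the delicate point and, in my view, the main obstacle: dropping it and writing $T_n^p=S+\max_i(a_i-2i)$ overcounts on strings with a long stabilized prefix (e.g.\ $1100$, for which $T=0$), so one has to be sure the peeling really produces $\max\{i+j-1\}$ rather than, say, the largest hook length. Once this deterministic identity is secured, the probabilistic work is largely routine.

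Next I would pass to the walk $W_0=0$, $W_m=W_{m-1}+(1-2\omega_m)$, which counts (zeros) $-$ (ones) among $\omega_1\cdots\omega_m$ and satisfies $W_{a_i}=a_i-2i$, so that $T_n^p=S+\max\{W_{a_i}:a_i>i\}$. The indices with $a_i\le i$ are precisely the $1$'s in the initial run of $1$'s, a run of length $O_P(1)$; using that the running maximum of $W$ over $\{0,\dots,n\}$ is always attained immediately before a south step (or at the endpoint), an elementary comparison shows that, on the event $\{1\le S\le n-1\}$ (whose probability tends to $1$),
\[
 \Bigl|\,T_n^p-S-\max_{0\le m\le n}W_m\,\Bigr|\;\le\;C_n,\qquad C_n=O_P(1).
\]
Together with the identity $S-\ahalf n=-\ahalf W_n$ this gives, after centering and scaling,
\[
 \frac{T_n^p-\ahalf n}{\sqrt n}\;=\;\frac{\max_{0\le m\le n}W_m}{\sqrt n}\;-\;\ahalf\,\frac{W_n}{\sqrt n}\;+\;o_P(1).
\]
I would then apply an invariance principle to the linearly interpolated, rescaled walk: for $p=\ahalf$ this is Donsker's theorem for a simple symmetric walk, with limit standard Brownian motion $B$ on $[0,1]$; for $p_n=\ahalf+\frac{\lambda/2}{\sqrt n}$ it is Donsker's theorem for a triangular array with bounded, unit-variance increments of mean $-\lambda/\sqrt n$, the drift integrating up so the limit is $X_t:=B_t-\lambda t$; and for $p>\ahalf$ the increments of $W$ have strictly negative mean, so $\max_{0\le m\le n}W_m$ is tight, $T_n^p=S+O_P(1)$, and the first weak limit follows at once from the CLT for $S\sim\mathrm{Bin}(n,p)$ (so that $\frac{S-pn}{\sqrt n}\Rightarrow N(0,p(1-p))$) together with Slutsky's theorem.

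Finally, for $p=\ahalf$ and for the threshold regime, continuity of the functional $x\mapsto\bigl(\sup_{[0,1]}x,\,x(1)\bigr)$ on $C[0,1]$ and the continuous mapping theorem turn the display above into
\[
 \frac{T_n^p-\ahalf n}{\sqrt n}\;\Rightarrow\;M-\ahalf X_1,\qquad M:=\sup_{0\le t\le 1}X_t,
\]
with $X=B$ in the critical case. It remains to identify this law, which is where the known distribution for the maximal height of a Brownian path on $[0,1]$ enters. By the reflection principle the joint density of $\bigl(\sup_{[0,1]}B,\,B_1\bigr)$ equals $\frac{2(2m-b)}{\sqrt{2\pi}}\,e^{-(2m-b)^2/2}$ on $\{\,m>\max(0,b)\,\}$; the substitution $y=m-\ahalf b$ (unit Jacobian, $2m-b=2y$) followed by integration of $b$ over $(-2y,2y)$ produces exactly $\frac{8\sqrt 2}{\sqrt\pi}\,y^2e^{-2y^2}$, i.e.\ $M-\ahalf B_1\sim\halfchithree$ (equivalently, $2M-B_1$ is Pitman's Bessel$(3)$ variable evaluated at time $1$, namely $\chi_3$). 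In the threshold case Girsanov's theorem multiplies the joint density of $\bigl(\sup_{[0,1]}B,\,B_1\bigr)$ by the factor $e^{-\lambda b-\lambda^2/2}$ to give that of $\bigl(\sup_{[0,1]}X,\,X_1\bigr)$; the same substitution and integration over $b\in(-2y,2y)$ now contribute $\int_{-2y}^{2y}e^{-\lambda b}\,db=\frac{2\sinh(2\lambda y)}{\lambda}$, producing the density
\[
 d\nu_\lambda(y)\;=\;\frac{4\sqrt 2}{\lambda\sqrt\pi}\,e^{-\ahalf\lambda^2}\,\sinh(2\lambda y)\,y\,e^{-2y^2}\,dy
\]
asserted in the theorem (and recovering the $\chi_3$ case as $\lambda\to 0$).
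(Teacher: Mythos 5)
Your proposal is correct, and its probabilistic core coincides with the paper's: the Young-diagram/corner-cutting identification of the stabilization time with the depth $\max\{i+j-1\}$, Donsker plus the continuous mapping theorem for $f\mapsto\sup f-\ahalf f(1)$, the reflection-principle joint density of $(M_1,B_1)$ integrated along a line, and Girsanov in the threshold regime. Your density computations check out (the sign of the drift is immaterial since the resulting density is even in $\lambda$). Where you genuinely depart from the paper is in passing from the ``nice'' strings to general ones. The paper restricts to $\omega_1=0$, $\omega_n=1$, where the exact identity $T=U+\max_k S_k-1$ holds, proves the limit there, and then writes the law of $T_n^p$ as an explicit convex combination $C_{0,n}^p\delta_0+\sum_r C_{r,n}^p\tilde\mu_r^p$ of special-case laws concentrated on $r\ge n-n^{1/4}$, transferring the limit by a three-term $\epsilon$--$\delta$ argument (its Section 4). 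You instead keep the general string, derive $T=\max\{a_i+S-2i:a_i>i\}$ (which is indeed the depth formula rewritten via the positions of the $1$'s), and sandwich $T$ against $S+\max_m W_m$ up to an $O_P(1)$ error that dies after division by $\sqrt n$; this replaces the paper's Section 4 by Slutsky's theorem and is arguably cleaner.

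Two points in that comparison need tightening. First, the good event must be ``$\omega$ is not already stable,'' i.e.\ $\{i:a_i>i\}\ne\emptyset$, not $\{1\le S\le n-1\}$: for $\omega=1^S0^{n-S}$ with $S\approx n/2$ one has $T=0$ while $S+\max_{0\le m\le n}W_m\approx n/2$, so on your stated event the discrepancy is not pointwise bounded; the claim $C_n=O_P(1)$ survives only because such strings carry probability $o(1)$. Second, your parenthetical ``or at the endpoint'' is exactly the case where the running maximum of $W$ is attained in the trailing run of $0$'s after the last $1$, and there $\max_{0\le m\le n}W_m$ can exceed $\max\{W_{a_i}\}$ by the length of that run; you need to add that this trailing run, like the initial run of $1$'s, has length $O_P(1)$ (stochastically dominated by a geometric variable in all three regimes). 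With both runs controlled one gets $|T-S-\max_{0\le m\le n}W_m|\le 1+(\text{initial $1$-run})+(\text{trailing $0$-run})$ on the good event, and the argument closes.
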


\begin{remark}
The distribution $\nu_\lambda$ which is found above is somewhere between a $\halfchithree$ and a Gaussian distribution. One can easily verify that as $\lambda \to 0$ the density function for $\nu_\lambda$ converges pointwise to the density function for $\halfchithree$. On the other hand, if we examine $\frac{T_{n}^{p_n}-p_n n}{\sqrt{n}}$, we see from the theorem that:
\[
\frac{T_{n}^{p_n}-p_n n}{\sqrt{n}} = \frac{T_{n}^{p_n}-\ahalf n}{\sqrt{n}} - \frac{\lambda}{2} \Rightarrow \nu_\lambda - \frac{\lambda}{2}
\]
The random variable $\nu_\lambda - \frac{\lambda}{2}$ has density function $\frac{4\sqrt{2}}{\lambda \sqrt{\pi}} e^{-\ahalf \lambda^2} \sinh \left(2 \lambda (x + \frac{\lambda}{2})\right) (x + \frac{\lambda}{2}) e^{-2(x + \frac{\lambda}{2})^{2}}$ for $x > -\lambda/2$. An easy calculation shows that as $\lambda \to \infty$ this density function converges pointwise to $\sqrt{\frac{2}{\pi}}e^{-2x^2}$. This is exactly the density of a $N(0,1/4)$ random variable. 
\end{remark}

\begin{remark} The proof of the theorem comes through connecting several ideas from combinatorics and probability theory. First, we notice a natural connection between strings in $\Omega_n^p$ and Young diagrams. Using Young diagrams as a tool, we can analyze the special case of strings that begin with a 0 and end with a 1 and relate the stabilization time to a simple random walk in one dimension. Using this connection to the random walk, we can find the limit distribution for the stabilization time in the special case mentioned above using the central limit theorem and Donsker's theorem. Finally, we show that the limit distribution for the special case of strings is actually the same as the limit distribution for general strings.

We have divided the proof into three lemmas which are stated below and each discussed and proved in their own sections. 
\end{remark}

\begin{lemma}
\label{lemma:1.2}
Given a string $\omega \in \{0,1\}^n$ there is a a natural random walk associated with $\omega$ which takes a step up for every $0$ in $\omega$ and takes a step down for every $1$ in $\omega$. To be precise, for $0\leq k\leq n$ let $S_{k}=\sum_{i=1}^{k}(1 - 2\omega_{i})$.
\noindent
In the special case where $\omega_1 = 0$ and $\omega_n = 1$, we have the following explicit relationship between the random walk $S_k$ and the stabilization time $T_{n}^{p}(\omega)$:

\[
T_{n}^{p}(\omega)=\frac{n}{2}+\max_{1\leq k\leq n}S_{k}-\frac{S_{n}}{2}-1
\]

\end{lemma}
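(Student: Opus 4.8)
The plan is to interpret the string $\omega$ as a Young diagram and track how the evolution acts on it. First I would set up the bijection: reading $\omega$ from left to right, each $0$ is a horizontal step and each $1$ is a vertical step of a lattice path, which is the boundary of a Young diagram $Y(\omega)$ sitting (say) in a bounding box determined by the number of $0$'s and $1$'s. The condition $\omega_1 = 0$, $\omega_n = 1$ says the path starts horizontally and ends vertically, so the diagram is ``nondegenerate'' at both ends. The key observation to prove is that one step of the evolution — replacing every $``01"$ by $``10"$ — corresponds to removing all the \emph{removable corner cells} of $Y(\omega)$ simultaneously (equivalently, peeling off one full ``diagonal layer'' of the diagram). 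Indeed, a substring $``01"$ in $\omega$ is exactly an inner corner of the boundary path, i.e. a removable cell, and swapping it to $``10"$ deletes that cell; the simultaneity over all occurrences is exactly one evolution step.

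Granting that, the stabilization time $T_n^p(\omega)$ equals the number of steps needed to empty $Y(\omega)$ by repeatedly stripping off corner layers, which is a classical quantity: it is the maximal hook length, or equivalently $\max_{c} \big( (\text{arm of } c) + (\text{leg of } c) + 1 \big)$, and for the outer boundary this is governed by how far the path gets above its endpoints. So the second step is to express this ``number of corner-peeling steps'' in terms of the walk $S_k = \sum_{i=1}^k (1 - 2\omega_i)$. Here $S_k$ goes up on a $0$ and down on a $1$, so $S_k$ literally tracks (twice) the signed vertical deviation of the boundary path, and the number of layers to remove the whole diagram is controlled by $\max_{1 \le k \le n} S_k$ relative to the endpoint height $S_n$. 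I would prove the exact formula
\[
T_n^p(\omega) = \frac{n}{2} + \max_{1 \le k \le n} S_k - \frac{S_n}{2} - 1
\]
by a direct induction on $n$ (or on the number of cells of $Y(\omega)$): one evolution step decreases $\max_k S_k$ by exactly $1$ as long as the diagram is nonempty, and decreases the right-hand side by exactly $1$, while the boundary conditions $\omega_1 = 0, \omega_n = 1$ are what guarantee $\max_k S_k$ strictly exceeds the relevant endpoint value until stabilization, so the process does not terminate early. The base case is the already-stabilized string $1^a 0^b$, where one checks $\max_k S_k = S_n/2 \cdot(\dots)$ directly and both sides give $0$; the $-1$ is precisely the correction coming from requiring $\omega_1 = 0$ and $\omega_n = 1$.

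I expect the main obstacle to be the induction step, specifically verifying that one evolution step changes $\max_{1 \le k \le n} S_k$ by exactly $1$ (not $0$, not more) whenever the string is not yet stabilized, and that it leaves $S_n$ unchanged. The claim $S_n$ is invariant is easy since the evolution permutes the bits (it preserves the number of $1$'s). The delicate point is the ``exactly $1$'': after swapping all $``01" \to ``10"$, the new walk $S'$ satisfies $S'_k \le S_k$ pointwise (a $1$ can only move earlier), and at the location of the last record maximum of $S$ the value drops by exactly one because that record is achieved right after a $0$ that is immediately followed — at some later point forming the peak — by the first $1$ that gets pulled past it; making this precise, and confirming that the new maximum is $\max_k S_k - 1$ rather than dropping further, is where the boundary hypotheses $\omega_1 = 0$ and $\omega_n = 1$ do real work. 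Once that lemma about the walk is in hand, the formula follows immediately by induction on $\max_k S_k - \tfrac{S_n}{2}$, which strictly decreases each step until it hits its terminal value, at which point $T = 0$.
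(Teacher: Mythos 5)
Your first half---interpreting $\omega$ as a Young diagram and observing that one evolution step removes all exposed corners simultaneously---is exactly the paper's route. The gap is in the induction you propose for the second half. The quantity that drops by exactly $1$ at every step of the evolution, for \emph{every} nonempty diagram, is the depth $\max\{i+j-1 : (i,j)\in Y\}$ of the Young diagram, not $\max_{1\le k\le n} S_k$. Your claimed invariant fails already for $\omega = 1010$ (which is not yet stable): there $\max_k S_k = 0$, and after the evolution step to $1100$ the maximum is still $0$. More structurally, the hypotheses $\omega_1=0$, $\omega_n=1$ are destroyed after the very first evolution step (the evolved string begins with a $1$), so they cannot be carried through an induction on evolution steps, and the right-hand side of the formula does not evaluate to $0$ on the terminal string $1^a0^b$: along the chain $0011 \mapsto 0101 \mapsto 1010 \mapsto 1100$ the right-hand side takes the values $3,2,1,1$, stalling at the last step while $T$ reaches $0$. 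A side error: the number of corner-peeling steps is not the maximal hook length $\max_c(\mathrm{arm}+\mathrm{leg}+1)$; for the staircase $(3,2,1)$ the peeling number is $3$ while the maximal hook is $5$. The correct classical quantity is $\max(i+j-1)$ over cells.

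The paper repairs exactly this point by decoupling the two identifications. It first proves $T_n^p(\omega)=\mathrm{Depth}(Y(\omega))$ for \emph{all} strings, via the fact that corner-cutting lowers the depth by exactly $1$ for every nonempty diagram---an induction that needs no boundary hypotheses. Then, \emph{once}, for the initial string with $\omega_1=0$ and $\omega_n=1$, it identifies $\mathrm{Depth}(Y(\omega)) = U + \max_{0\le k\le n} S_k - 1$ by the $45^{\circ}$ rotation carrying the boundary path of $Y(\omega)$ to the graph of $S$; the boundary hypotheses are used only to guarantee that the path $\pi(\omega)$ is the full boundary of $Y(\omega)$. Substituting $U=\tfrac{1}{2}(n-S_n)$ gives the formula. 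If you replace your walk-based induction by an induction on the depth of the diagram, and invoke the walk only to evaluate the depth of the initial configuration, your argument goes through.
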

\begin{remark}
The proof of this lemma comes by mapping each string to a Young diagram in a natural way. The stabilization time of string turns out to be equal to a quantity called the \emph{depth} of the Young diagram. In the special case $\omega_1 = 0$ and $\omega_n = 1$, the depth of the Young diagram is also found to be equal to the above expression for our random walk. The proof of this lemma is discussed in Section \ref{connection_to_rw_section}.
\end{remark}
\begin{lemma}
\label{lemma:1.3}
Let $\tilde{\Omega}_{n}^{p} = \{\omega \in \{0,1\}^n : \omega_1 = 0, \omega_n = 1\}$ be the probability space where each bit, except for the first and last, is chosen independently at random to be a 1 with probability $p$ and to be $0$ with probability $1-p$. The stabilization time of a string is a random variable $\tilde{T}_{n}^{p}:\tilde{\Omega}_{n}^{p}  \to \{1,2,\ldots,n-1\}$.  Then we have convergence of $\tilde{T}_{n}^{p}$ analogous to the statement in Theorem \ref{thm:1.1}: $(\tilde{T}_{n}^{p} - pn)/\sqrt{n} \To N(0,p(1-p)$ for $p>\ahalf$, $(\tilde{T}_{n}^{p} - \ahalf n)/\sqrt{n} \To \halfchithree$ for $p = \ahalf$ and $(\tilde{T}_{n}^{p_n} - \ahalf n)/\sqrt{n} \To \nu_\lambda$ in the threshold setting $p=p_n$.

\end{lemma}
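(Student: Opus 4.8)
\textbf{Proof plan for Lemma \ref{lemma:1.3}.} The idea is to substitute the random‑walk formula of Lemma \ref{lemma:1.2} into classical limit theorems. On $\tilde\Omega_n^p$ the increments $1-2\omega_i$ of the walk $S_k=\sum_{i=1}^k(1-2\omega_i)$ are deterministic at the ends — equal to $+1$ for $i=1$ (since $\omega_1=0$) and to $-1$ for $i=n$ (since $\omega_n=1$) — and i.i.d.\ for $2\le i\le n-1$, each equal to $+1$ with probability $1-p$ and $-1$ with probability $p$, hence of mean $1-2p$ and variance $4p(1-p)$. Since only two of the $n$ increments are forced, replacing them by i.i.d.\ copies perturbs $S_k$ by $O(1)$ uniformly in $k$, which is invisible after dividing by $\sqrt n$; also $S_1=+1>0$, so $\max_{1\le k\le n}S_k=\max_{0\le k\le n}S_k$. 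Lemma \ref{lemma:1.2} gives
\[
\frac{\tilde T_n^p-cn}{\sqrt n}=\frac{(\ahalf-c)n}{\sqrt n}+\frac{\max_k S_k}{\sqrt n}-\frac{S_n}{2\sqrt n}-\frac{1}{\sqrt n},
\]
and I will use the centering constant $c=p$ when $p>\ahalf$ and $c=\ahalf$ when $p=\ahalf$ or $p=p_n$.

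\textbf{The case $p>\ahalf$.} The i.i.d.\ increments have strictly negative mean $1-2p$, so $\max_{0\le k\le n}S_k$ is tight (being at most $1$ plus the running maximum of an i.i.d.\ walk with negative drift, which is a.s.\ finite), hence $\max_kS_k/\sqrt n\To 0$. Writing $-\tfrac{S_n}{2\sqrt n}=-\tfrac{S_n-(1-2p)n}{2\sqrt n}+(p-\ahalf)\sqrt n$, the deterministic part cancels the term $(\ahalf-p)\sqrt n$ above, while the central limit theorem gives $\tfrac{S_n-(1-2p)n}{\sqrt n}\To N(0,4p(1-p))$. Therefore $\tfrac{\tilde T_n^p-pn}{\sqrt n}\To N(0,p(1-p))$.

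\textbf{The cases $p=\ahalf$ and $p=p_n$.} Now $c=\ahalf$, the first summand vanishes, and $\tfrac{\tilde T_n^p-\frac12 n}{\sqrt n}=\tfrac{\max_kS_k}{\sqrt n}-\tfrac{S_n}{2\sqrt n}-\tfrac1{\sqrt n}$. The i.i.d.\ increments are bounded, have mean $0$ (when $p=\ahalf$) or $1-2p_n=-\lambda/\sqrt n$ (threshold), and variance $4p(1-p)\to1$; by Donsker's invariance principle — in its triangular‑array form in the threshold case, where the Lindeberg condition is automatic — $\bigl(S_{\lfloor nt\rfloor}/\sqrt n\bigr)_{t\in[0,1]}\To(W_t)_{t\in[0,1]}$ in $C[0,1]$, with $W_t=B_t$ when $p=\ahalf$ and $W_t=B_t-\lambda t$ in the threshold case, $B$ a standard Brownian motion. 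Since $f\mapsto(\max_{[0,1]}f,\,f(1))$ is continuous on $C[0,1]$, the continuous mapping theorem yields $\bigl(\max_kS_k/\sqrt n,\ S_n/\sqrt n\bigr)\To(\max_{[0,1]}W,\ W_1)$, so
\[
\frac{\tilde T_n^p-\ahalf n}{\sqrt n}\To\max_{0\le t\le1}W_t-\frac{W_1}{2}=:Y_\lambda
\]
(with $\lambda=0$ understood when $p=\ahalf$).

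\textbf{Identifying the limit laws, and the main obstacle.} It remains to check $Y_0\sim\halfchithree$ and $Y_\lambda\sim\nu_\lambda$. Here I will use the joint law of the running maximum and terminal value of a Brownian motion $W$ with drift $-\lambda$ ($\lambda\ge0$), which follows from the reflection principle together with Girsanov's theorem:
\[
\p\bigl(\textstyle\max_{[0,1]}W\in dm,\ W_1\in dx\bigr)=\frac{2(2m-x)}{\sqrt{2\pi}}\,e^{-(2m-x)^2/2}\,e^{-\lambda x-\lambda^2/2}\,dm\,dx,\qquad 0\le m,\ x\le m.
\]
Substituting $v=m-x/2$ and $u=m$ (so $2m-x=2v$, $x=2u-2v$, Jacobian $2$, and $0\le u\le2v$) and integrating $u$ over $[0,2v]$ turns $e^{-\lambda x}=e^{-2\lambda u+2\lambda v}$ into $\sinh(2\lambda v)/\lambda$, leaving the density of $Y_\lambda=v$ equal to $\tfrac{4\sqrt2}{\lambda\sqrt\pi}\,e^{-\lambda^2/2}\,\sinh(2\lambda v)\,v\,e^{-2v^2}$ for $v>0$, which is exactly $d\nu_\lambda$; letting $\lambda\to0$ (or computing directly at $\lambda=0$) gives $\tfrac{8\sqrt2}{\sqrt\pi}\,v^2e^{-2v^2}$, which is $d\halfchithree$. (Equivalently, for $\lambda=0$, Pitman's theorem that $2M_t-B_t$ is a Bessel$(3)$ process gives $Y_0=\ahalf(2M_1-B_1)\overset{d}{=}\ahalf\sqrt{Z_1^2+Z_2^2+Z_3^2}$ at once.) I expect the delicate points to be the triangular‑array Donsker step in the threshold regime together with a clean accounting of the forced endpoint increments and the $O(1/\sqrt n)$ error terms, and the change‑of‑variables bookkeeping that produces the $\sinh$ factor — it is this last computation that actually pins down the one‑parameter family $\nu_\lambda$ and matches Theorem \ref{thm:1.1}.
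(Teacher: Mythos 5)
Your proposal is correct and follows essentially the same route as the paper: Lemma \ref{lemma:1.2} plus tightness of the running maximum and the CLT for $p>\ahalf$, Donsker and the continuous mapping theorem for $p=\ahalf$, and the reflection-principle joint density together with Girsanov to identify $\halfchithree$ and $\nu_\lambda$ (your change of variables producing the $\sinh$ factor is the paper's line integral in Lemma \ref{lemma:3.7} in different coordinates, and the sign of the drift is immaterial since the resulting density is even in $\lambda$). The only divergence is technical: in the threshold regime you invoke a triangular-array (Lindeberg) form of the invariance principle, whereas the paper verifies convergence of the generators $nA_nf\to\ahalf f''+\lambda f'$ via Stroock--Varadhan; both are valid.
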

\begin{remark}
Once $\tilde{T}_{n}^{p}(\omega)=\frac{n}{2}+\max_{1\leq k\leq n}S_{k}-\frac{S_{n}}{2}-1$
is established in Lemma \ref{lemma:1.2}, the proof of Lemma \ref{lemma:1.3} is an exercise using the central limit theorem and results about convergence of random walks to diffusion processes. For $p > \frac{1}{2}$
some simple analysis shows that the term $\max_{1\leq k\leq n}S_{k}$
does not contribute to the limit distribution, and the central limit
theorem is enough to prove the limit. In the setting $p=\frac{1}{2}$, the random variable
$\chi_3$ arises from a connection to Brownian motion. By Donsker's theorem we know that when scaled correctly the random walk $S_k$ converges to a Brownian motion. The $\chi_3$ arises from the following fact: if $B_{t}$ is a Brownian motion
and $M_{t}=\max_{s\leq t}B_{s}$ is its running maximum then $M_{1}-\frac{1}{2}B_{1} \stackrel{d}{=} \halfchithree$.  In the threshold setting, we analogously observe that the random walk $S_k$ converges to a Brownian motion but this time with constant positive drift $\lambda$. We then use the Girsanov theorem to calculate $M^{\lambda}_{1}-\frac{1}{2}B^{\lambda}_{1} \stackrel{d}{=} \nu_\lambda$ when $B^{\lambda}_t$ is a Brownian motion with drift $\lambda$. The full details of the proof of this lemma are displayed in Section 3. 
\end{remark}

\begin{lemma}
\label{lemma:1.6}
Let $\tilde{\mu}_{r}^{p}$ be the law of the special case random variable
$\tilde{T}_{r}^{p}$ and let $\mu_{n}^{p}$ be the law of the random
variable $T_{n}^{p}$. Let $\delta_{0}$ be the unit mass at 0.

For $p = \frac{1}{2}$ these measures are related by:

\[
\mu_{n}^{\frac{1}{2}}=\frac{n+1}{2^{n}}\delta_{0} + \sum_{r=0}^{n-2} \frac{n-r-1}{2^{n-r}} \tilde{\mu}_{r+2}^{\frac{1}{2}}
\]

For $p \neq \frac{1}{2}$ these measures are related by:

\[
\mu_{n}^{p}=\frac{p^{n+1}-(1-p)^{n+1}}{2p-1}\delta_{0}+\sum_{r=0}^{n-2}\frac{(1-p)p^{n-r}-p(1-p)^{n-r}}{2p-1}\tilde{\mu}_{r+2}^{p}
\]

 Moreover, from this
relation, we can show that the random variable $T_{n}^{p}$ will have
the same limit distribution as the special case random variable $\tilde{T}_{n}^{p}$.  The same argument holds in the threshold setting $p=p_n$.
\end{lemma}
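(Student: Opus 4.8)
The plan is to derive the two measure-decomposition identities by conditioning on the location of the first $0$ and the last $1$ in a generic string $\omega \in \{0,1\}^n$, and then to read off the limit-distribution statement from the identities together with Lemma \ref{lemma:1.3}. The starting observation is that prepending a block of $1$'s or appending a block of $0$'s to a string does not change its stabilization time: a leading $1$ is already as far left as it can go and never participates in a $``01"$ swap, and symmetrically for a trailing $0$. Hence if $\omega = 1^a\, 0\, \omega'\, 1\, 0^b$ with $\omega' \in \{0,1\}^{r}$ (so that the full length is $n = a + b + r + 2$), then $T_n^p(\omega) = T_{r+2}^p(0\,\omega'\,1)$, which is exactly a realization of the special-case variable $\tilde T_{r+2}^p$. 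The one exceptional case is when $\omega$ has no $0$ after its first $1$, equivalently $\omega = 1^a 0^b$ is already stabilized; this contributes the atom at $0$.

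First I would make this bookkeeping precise. For fixed $r$ with $0 \le r \le n-2$, summing over the admissible $a, b \ge 0$ with $a + b = n - r - 2$ gives a factor $\sum_{a+b = n-r-2} p^{a}(1-p)\cdot(1-p)\,p\,(1-p)^{b} = p(1-p)^2 \sum_{a=0}^{n-r-2} p^a (1-p)^{n-r-2-a}$, where the isolated $0$ and $1$ bounding $\omega'$ supply the factor $(1-p)\cdot p$ and each internal bit of $\omega'$ is already accounted for inside $\tilde\mu_{r+2}^p$. Evaluating the finite geometric sum yields $\frac{(1-p)p^{\,n-r}-p(1-p)^{\,n-r}}{2p-1}$ when $p \ne \ahalf$, and the combinatorial count $(n-r-1)/2^{\,n-r}$ when $p=\ahalf$; similarly the stabilized strings $1^a0^b$ with $a+b=n$ have total mass $\sum_{a=0}^{n} p^a(1-p)^{n-a} = \frac{p^{n+1}-(1-p)^{n+1}}{2p-1}$, respectively $(n+1)/2^n$. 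This establishes both displayed identities. The only mild subtlety is checking that the weights sum to $1$, which one gets either by a direct geometric-series manipulation or, more cheaply, by noting that the identity is an exact decomposition of a probability measure and hence automatically normalized.

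The remaining task is the limit statement. Write $a_n := \ahalf n$ (or $pn$ when $p > \ahalf$), $b_n := \sqrt n$. I want to show $(T_n^p - a_n)/b_n$ and $(\tilde T_n^p - a_n)/b_n$ have the same weak limit. From the identity, the law of $(T_n^p - a_n)/b_n$ is a mixture: mass $c_n^{(0)}$ on the point $-a_n/b_n \to -\infty$ (in every regime $c_n^{(0)} \to 0$ geometrically, except we must be slightly careful at $p=\ahalf$ where $c_n^{(0)} = (n+1)/2^n \to 0$ still), plus, for each $r$, mass $c_n^{(r)}$ on the law of $(\tilde T_{r+2}^p - a_n)/b_n$. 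The dominant contribution comes from $r = n - O(\sqrt n)$: for such $r$ one has $r + 2 = n(1 + o(1))$ and $a_{r+2}, b_{r+2}$ differ from $a_n, b_n$ by lower-order amounts, so by Lemma \ref{lemma:1.3} and a Slutsky-type argument $(\tilde T_{r+2}^p - a_n)/b_n$ is close in distribution to the target. One then shows the mixing weights $c_n^{(r)}$ concentrate on this range: in the $p \ne \ahalf$ case the weights decay geometrically in $n - r$, so only $r = n - O(1)$ matters; in the $p = \ahalf$ case $c_n^{(r)} = (n-r-1)2^{-(n-r)}$, which after the substitution $j = n-r$ is $\propto (j-1)2^{-j}$, again summable and concentrated at small $j$. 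A clean way to package this is to test against a bounded Lipschitz function $f$, write $\e[f((T_n^p-a_n)/b_n)]$ as the mixture sum, split off the finitely many (or geometrically dominant) terms near $r = n$, apply Lemma \ref{lemma:1.3} to each, and bound the tail by $\|f\|_\infty$ times the residual mass. The threshold setting $p = p_n$ is handled identically, with the one extra point that $p_n$ depends on $n$: here one checks that replacing $p_{r+2}$ by $p_n$ in $\tilde T_{r+2}^{p_n}$ for $r = n - O(\sqrt n)$ changes the centered-scaled variable by $o(1)$, since $|p_n - p_{r+2}|\sqrt n \to 0$ on that range, and the drift $\lambda$ in Lemma \ref{lemma:1.3} is unaffected to leading order.

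The main obstacle is the last step in the $p=\ahalf$ and threshold regimes: unlike the $p \ne \ahalf$ Gaussian case, where the weights die off so fast that essentially only $r = n-2$ survives, here a genuinely spread-out (though still tight) mixture over $r$ must be controlled, and one must verify uniformly over the relevant range of $r$ that the centering $a_{r+2}$ and scaling $b_{r+2}$ appropriate to $\tilde T_{r+2}$ can be swapped for $a_n, b_n$ without affecting the limit. This is where the argument has to be written with some care, using that $\chi_3$ (resp.\ $\nu_\lambda$) is a continuous distribution so that the small shifts and rescalings wash out in the weak limit.
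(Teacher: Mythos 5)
Your proposal is correct and follows essentially the same route as the paper: the same decomposition by leading $1$'s and trailing $0$'s (the paper's Lemmas 4.1--4.2), and the same mixture-concentration argument for transferring the limit (its Proposition 4.5, which uses a cutoff $\theta_n = \lfloor n^{1/4}\rfloor$ and splits into three terms much as you describe). Two small points: your intermediate weight $p^{a}(1-p)\cdot(1-p)\,p\,(1-p)^{b}$ carries an extra factor of $(1-p)$ (the correct product is $p^{a+1}(1-p)^{b+1}$, though the coefficient you then state is right), and the window of dominant $r$ must be taken with $n-r = o(\sqrt{n})$ rather than $O(\sqrt{n})$ so that the recentering $(a_n - a_{r+2})/b_n$ vanishes --- which the rapid decay of the weights in $n-r$ permits, as you note in your final paragraph.
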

\begin{remark}
The formulas stated in Lemma \ref{lemma:1.6} are obtained by carefully counting the number of ways of adding leading 1's on the left and trailing 0's on the right of a string of special type $\tilde{\omega} \in \tilde{\Omega}_n^p$, and by exploiting the obvious fact that such additions of leading 1's and trailing 0's does not affect stabilization time. Hence, the  stabilization time for a general string $\omega$ is equal to the stabilization time for the special case substring $\tilde{\omega}$ one obtains from $\omega$ by deleting any leading $1$'s or trailing $0$'s. This gives the relationship between $\tilde{\mu}$ and $\mu$ in the lemma. One can see that this is a convex combination of the probability measures $\tilde{\mu}_r$, which is heavily weighted towards higher values of $r$. Some elementary analysis is used to show that this weighting is such that $\mu$ and $\tilde{\mu}$ have the same limit distribution. The details of this lemma are given in Section 4. 
\end{remark}

\section{Connection to Random Walk in the Special Case}
\label{connection_to_rw_section}

In this section we aim to prove Lemma \ref{lemma:1.2} which gives the following explicit formula for the stabilization time of $\omega \in \{0,1\}^n$ 
satisfying $\omega_1 = 0$ and $\omega_n = 1$.
\[
T_{n}^{p}(\omega)=\frac{n}{2}+\max_{1\leq k\leq n}S_{k}-\frac{S_{n}}{2}-1
\]
\noindent

This formula, once known, can be proved directly by induction. The proof by induction, however, obscures the actual mechanics of the evolution. A more illuminating solution, which is how the formula was first discovered, comes from mapping binary strings to Young diagrams in a particular way. Under this interpretation, the evolution process becomes particularly simple and the explicit formula arises from the geometry of the Young diagram. This is the proof we will present in this section. We will begin by creating a map from $\{0,1\}^n$ to the space of Young diagrams.

\begin{definition} 
Let $\bY$ denote the collection of all finite sets $Y \subseteq \bN \times \bN$ which have the following property.
\[
(i,j) \in Y \Rightarrow (i',j') \in Y \mbox{ for all $i',j' \in \bN$ such that $i' \leq i$ and $j' \leq j$.}
\]
The empty set is also counted in $\bY$. It is customary to 
represent a set $Y \in \bY$ as a picture containing a ``collection of 
boxes'' (square boxes of side 1), where for every 
$(i,j) \in Y$ we take in our picture the box which has $(i,j)$
as its top right corner. In this guise, the set $Y$ is called
a {\em Young diagram}, see e.g. \cite{S1999}. The following example 
illustrates this representation.
\[
Y = \{ (1,1), (2,1), (3,1), (4,1), (1,2), (2,2), (1,3), (2,3) \} = \begin{aligned}\yng(2,2,4)\end{aligned}
\]
\end{definition}

\begin{definition} \label{def:2.2} For every $\omega \in \{0,1\}^{n}$ with $\omega_1 = 0$ and $\omega_n = 1$, let $U=U(\omega)$ be the number of $1$'s in the string $\omega$. 
Let $B$ be a $U\times(n-U)$ grid on the x-y plane. Reading $\omega$ from left to right, we construct a path starting from the top-left corner of $B$  by drawing a line 
horizontally to the right whenever we encounter a $0$ in $\omega$ and a line vertically downward whenever we encounter a $1$ in $\omega$. Since there are $U$ $``1"$'s 
and $n-U$ $``0"$'s to be found in $\omega$ we will get a path from the top-left corner in $B$ to the bottom-right corner in $B$. Here is an example of the path generated by the 
string $\omega = (0,1,1,0,1,0,1,1)$:
\begin{center}

\begin{tikzpicture}[scale=0.5]
\draw[fill] (3,0) circle (1pt);
\draw[fill] (3,1) circle (1pt);
\draw[fill] (3,2) circle (1pt);
\draw[fill] (2,2) circle (1pt);
\draw[fill] (2,3) circle (1pt);
\draw[fill] (1,3) circle (1pt);
\draw[fill] (1,4) circle (1pt);
\draw[fill] (1,5) circle (1pt);
\draw[fill] (0,5) circle (1pt);
\draw[help lines] (0,0) grid (3,5);
\draw[thick] (3,0) --(3,1)--(3,2)--(2,2)--(2,3)--(1,3)--(1,4)--(1,5)-- (0,5);
\end{tikzpicture}
\end{center}

\noindent
Define $\pi(\omega) \subset \mathbb{N} \times \mathbb{N}$ to be the unique path constructed in this manner. In our example, $\pi(0,1,1,0,1,0,1,1) = \{(0,5),(1,5),(1,4),(1,3),(2,3),(2,2),(3,2),(3,1),(3,0)\}$. The set of boxes under this path defines a Young diagram, which we denote $Y(\omega)$. In our example, this is the following diagram.
\[
Y(\omega) = \yng(1,1,2,3,3)
\]

\end{definition}

\begin{definition}
\label{def:2.3}
Let $Y \in \bY$ be a Young diagram and let $(i, j) \in Y$. We say that $(i,j)$ is an \emph{exposed corner} of $Y$ if $(i + 1, j) \not\in Y$ and $(i, j + 1) \not\in Y$. The 
\emph{corner cutting map} $\cut : \bY \to \bY$ is the map that removes all the exposed corners of a Young diagram as follows.
\[
\cut (Y) := Y \setminus \{ (i,j) \in Y \mid
(i, j) \text{ is an exposed corner of $Y$}\}
\]
It is easily verified by the definition that removing the exposed corners
does indeed yield another Young diagram. That is to say, $\cut$ is a well-defined function from $\bY \to \bY$.

\end{definition}

\begin{proposition}
Let $\omega \in \{0,1\}^{n}$ be a string and let $\omega^{\prime}$ be the string obtained from $\omega$ after one stage of the evolution, that is after replacing 
instances of $``01"$ with $``10"$ once. Then the Young diagram $Y(\omega^{\prime})$ is obtained by cutting the corners of the Young diagram $Y(\omega)$.
\[
Y(\omega^{\prime}) = \cut\left(Y(\omega)\right)
\]
\end{proposition}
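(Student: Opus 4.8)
The plan is to prove the statement geometrically, reading everything off the path picture of Definition~\ref{def:2.2}. The key starting observation is that $\pi(\omega)$ is exactly the boundary staircase of the Young diagram $Y(\omega)$ inside its bounding rectangle, traversed from the top-left to the bottom-right corner, with the $i$-th step a unit step to the right when $\omega_i=0$ and a unit step downward when $\omega_i=1$. (This construction makes sense for an arbitrary binary string, so $Y(\omega')$ is well-defined even if $\omega'$ no longer begins with $0$ or ends with $1$; alternatively one notes that leading $1$'s and trailing $0$'s leave $Y$ unchanged and are fixed by the evolution.) Under this dictionary a factor ``$01$'' occupying positions $i,i+1$ of $\omega$ is a pair of consecutive steps ``right then down'', that is, a \emph{convex corner} of $\pi(\omega)$ at the lattice point $v$ reached after step $i$, and the unit box having $v$ as its top-right vertex is tucked into this corner.

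The first thing I would nail down is the combinatorial dictionary precisely: (a)~the convex corners of $\pi(\omega)$ are in bijection with the occurrences of ``$01$'' in $\omega$, and the box tucked into the corner at $v=(i,j)$ is exactly an \emph{exposed corner} of $Y(\omega)$ in the sense of Definition~\ref{def:2.3}, since from the coordinate description one checks directly that $(i,j)\in Y(\omega)$ while $(i+1,j)\notin Y(\omega)$ and $(i,j+1)\notin Y(\omega)$, and conversely every exposed corner arises this way; and (b)~the positions $i$ with $(\omega_i,\omega_{i+1})=(0,1)$ are pairwise non-adjacent, since otherwise some $\omega_{i+1}$ would have to be both $1$ and $0$. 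Because of (b), one step of the evolution --- which replaces \emph{every} ``$01$'' by ``$10$'' at once --- is just the simultaneous swap of a family of pairwise disjoint pairs of consecutive bits, hence unambiguous and amenable to a corner-by-corner analysis.

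Next I would carry out that local analysis. Replacing a single ``$01$'' at positions $i,i+1$ by ``$10$'' turns the two steps ``right then down'' into ``down then right''; since these two broken paths have the same net displacement, the endpoint of step $i+1$ is unchanged, so $\pi(\omega')$ agrees with $\pi(\omega)$ except that at each convex corner it now rounds the corner vertex from the other side. Each such detour deletes exactly the one box tucked into that corner --- that is, exactly one exposed corner of $Y(\omega)$ --- and changes nothing else; since by (b) the modified pairs of steps are disjoint and the deleted boxes are distinct, performing all the swaps simultaneously deletes precisely the full set of exposed corners. Hence $Y(\omega')=Y(\omega)\setminus\{\text{exposed corners of }Y(\omega)\}=\cut(Y(\omega))$.

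The one step that requires genuine care rather than routine bookkeeping is item~(a): translating the set-theoretic definition of $Y(\omega)$ as ``the boxes lying under $\pi(\omega)$'' into the statement that convex corners of the path correspond to exposed corners of the diagram, and that a single ``right--down''$\,\to\,$``down--right'' flip removes exactly that box and no other. Once this is set up, the rest --- the non-adjacency remark (b) and the passage from one flip to the simultaneous evolution step --- is immediate. I would also check the correspondence on the running example $\omega=01101011\mapsto 10110101$ of Definition~\ref{def:2.2} as a sanity test.
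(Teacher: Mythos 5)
Your proposal is correct and follows essentially the same route as the paper: identify occurrences of ``01'' with the convex (right-then-down) corners of the lattice path $\pi(\omega)$, observe these are exactly the exposed corners of $Y(\omega)$, and note that flipping each corner to ``down-then-right'' removes precisely that box. You spell out two points the paper leaves implicit --- the pairwise non-adjacency of the ``01'' positions (so the simultaneous swaps are disjoint) and the well-definedness of $Y(\omega')$ when $\omega'$ leaves the special form --- which is a welcome tightening but not a different argument.
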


\begin{proof}
Consider the path $\pi(\omega)$ defined in Definition \ref{def:2.2}. Any instances of $``01"$ in $\omega$ will correspond to a horizontal segment followed by a vertical segment, while instances of $``10"$ correspond to a vertical segment followed by a horizontal segment. As such, the evolution $``01" \to ``10$ will translate into the following pictorial evolution for the path $\pi(\omega)$.

\begin{center}
\begin{tikzpicture}
\draw[fill] (1,0) circle (1pt);
\draw[fill] (1,1) circle (1pt);
\draw[fill] (0,1) circle (1pt);
\draw[fill] (5,0) circle (1pt);
\draw[fill] (4,1) circle (1pt);
\draw[fill] (4,0) circle (1pt);
\draw[help lines] (0,0) grid (1,1);
\draw[help lines] (4,0) grid (5,1);
\draw (1,0)--(1,1)--(0,1);
\draw[->] (2,0.5)--(3,0.5);
\draw (5,0)--(4,0)--(4,1);
\end{tikzpicture}
\end{center}

\noindent
These instances of $``01"$ correspond exactly to the exposed corners of $Y(\omega)$ since they have no neighbors above them or to their right. We therefore see that the substitution rule $``01" \mapsto ``10"$ amounts precisely to removing the exposed corners of $Y(\omega)$.
\end{proof}

\begin{definition}
Let $Y \in \bY$, $Y \not= \emptyset$ be a non-empty Young diagram. We define:
 \[
\depth (Y) = \max \{ i + j - 1 | (i, j) \in Y\}
\]
 We also set the convention $\depth(\emptyset) = 0$.
\end{definition}

\begin{proposition}
Let $Y \in \bY$, $Y \not= \emptyset$. Then $\depth(\cut(Y)) = \depth(Y) - 1$.
\end{proposition}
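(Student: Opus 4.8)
The plan is to prove the identity by sandwiching $\depth(\cut(Y))$ between matching upper and lower bounds, after first isolating a degenerate case. Write $d = \depth(Y)$.

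First I would observe that the \emph{deepest} boxes of $Y$ are automatically exposed corners. Indeed, suppose $(i,j) \in Y$ with $i+j-1 = d$. If $(i+1,j) \in Y$, then $(i+1)+j-1 = d+1 > d$, contradicting the definition of $\depth$; similarly $(i,j+1) \notin Y$. So every box attaining $i+j-1 = d$ is exposed and is therefore removed by $\cut$. Since $\cut(Y) \subseteq Y$, every surviving box satisfies $i+j-1 \le d$, and no surviving box attains equality; hence $\depth(\cut(Y)) \le d-1$, where in the case $\cut(Y) = \emptyset$ we use the convention $\depth(\emptyset) = 0 \le d-1$.

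For the lower bound I would first pin down when $\cut(Y) = \emptyset$. If $Y$ contains any box besides $(1,1)$, then by the defining property of Young diagrams $Y$ contains $(2,1)$ or $(1,2)$, so $(1,1)$ is not an exposed corner and survives; thus $\cut(Y) = \emptyset$ forces $Y = \{(1,1)\}$, in which case $d = 1$ and $\depth(\cut(Y)) = 0 = d-1$ as required. Otherwise $\cut(Y) \ne \emptyset$, $|Y| \ge 2$, and hence $d \ge 2$. Choose $(i,j) \in Y$ with $i+j-1 = d$; since $i+j = d+1 \ge 3$, at least one of $i \ge 2$ or $j \ge 2$ holds. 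If $i \ge 2$, then $(i-1,j) \in Y$, and because $(i-1+1,j) = (i,j) \in Y$ the box $(i-1,j)$ is \emph{not} an exposed corner, so $(i-1,j) \in \cut(Y)$ and witnesses $\depth(\cut(Y)) \ge (i-1)+j-1 = d-1$. The case $j \ge 2$ is symmetric, using $(i,j-1)$. Combining with the upper bound gives $\depth(\cut(Y)) = d-1$.

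The argument is essentially bookkeeping about the positions of boxes, so I do not expect a genuine difficulty; the one point requiring care — the main obstacle, such as it is — is the boundary behaviour: correctly identifying $\cut(Y) = \emptyset$ with $Y = \{(1,1)\}$ so that the convention $\depth(\emptyset)=0$ is consistent with the formula, and verifying that in all remaining cases the witness box $(i-1,j)$ or $(i,j-1)$ truly fails the exposed-corner test and hence survives the cut.
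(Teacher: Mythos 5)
Your proof is correct and follows essentially the same route as the paper's: show the depth-maximizing boxes are exposed corners (giving the upper bound), then use $(i-1,j)$ or $(i,j-1)$ as a surviving witness for the lower bound, with the case $Y=\{(1,1)\}$ handled via the convention $\depth(\emptyset)=0$. Your treatment of the degenerate case is slightly more explicit (you characterize exactly when $\cut(Y)=\emptyset$), but the argument is the same.
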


\begin{proof}
Suppose $(i, j) \in Y$ such that $i + j - 1 = \depth(Y)$. Clearly $(i + 1, j) \not\in Y$ and $(i, j + 1) \not\in Y$, since assuming otherwise contradicts $i + j - 1 = \depth(Y)$ is maximal. Then 
$(i, j)$ is an exposed corner, so $(i, j) \not\in \cut(Y)$. Hence $\depth(\cut(Y)) < \depth(Y)$.

Now, if either $(i-1,j) \in \cut(Y)$ or $(i,j-1)\in \cut(Y)$ then $\depth(\cut(Y)) \geq i+j-2 = \depth(Y) - 1$. By combining 
the inequalities, we conclude $\depth(\cut(Y)) = \depth(Y) - 1$. Otherwise, both $(i-1,j) \notin \cut(Y)$ 
and $(i,j-1)\notin \cut(Y)$. But this only happens in the case $Y = \{(1,1)\}$, in which case $\cut(Y)=\emptyset$ and 
the proposition holds by $\depth(\emptyset) = 0$. 
\end{proof}

\begin{proposition}
\label{prop:2.7}
For every $\omega \in \{0,1\}^{n}$ we have that:
\[
T_n^p(\omega) = \depth(Y(\omega))
\]
As in the introduction, $T_n^p(\omega)$ denotes the stabilization time of the string $\omega$
\end{proposition}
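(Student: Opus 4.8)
The plan is to prove the identity by induction on the stabilization time $T_n^p(\omega)$, using the two facts already established: that one step of the evolution realizes the corner-cutting map, $Y(\omega') = \cut(Y(\omega))$, and that corner-cutting lowers the depth by exactly one, $\depth(\cut(Y)) = \depth(Y) - 1$ for $Y \neq \emptyset$. A preliminary remark is needed: the construction of $Y(\omega)$ in Definition \ref{def:2.2} was phrased for strings with $\omega_1 = 0$ and $\omega_n = 1$, but the path recipe makes sense verbatim for an arbitrary $\omega \in \{0,1\}^n$ — one starts the lattice path at the point $(0,U)$, where $U$ is the number of $1$'s in $\omega$, moves right for each $0$ and down for each $1$, and lets $Y(\omega)\in\bY$ be the set of boxes lying weakly below the resulting path — and the proof of the proposition $Y(\omega') = \cut(Y(\omega))$ goes through at this generality since it is entirely local in the path. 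The one elementary fact to record is that $Y(\omega) = \emptyset$ if and only if $\omega$ is a stabilized string $11\cdots1100\cdots00$: for such a string the path consists of all of its down-steps followed by all of its right-steps, so no box lies below it, and conversely any occurrence of $``01"$ in $\omega$ produces a box under the path.

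With this in place the induction is immediate. If $T_n^p(\omega) = 0$ then $\omega$ is already stabilized, so $Y(\omega) = \emptyset$ and $\depth(Y(\omega)) = 0$ by the convention $\depth(\emptyset) = 0$. If $T_n^p(\omega) = t \geq 1$, then $\omega$ is not stabilized, hence $Y(\omega) \neq \emptyset$; let $\omega'$ be the string obtained from $\omega$ after a single evolution step, so that $T_n^p(\omega') = t - 1$ since the evolution is deterministic. By the induction hypothesis $\depth(Y(\omega')) = t-1$, while $Y(\omega') = \cut(Y(\omega))$ and $\depth(\cut(Y(\omega))) = \depth(Y(\omega)) - 1$ by the two propositions; combining these gives $\depth(Y(\omega)) = t = T_n^p(\omega)$. (Equivalently, without explicit induction: after $k$ steps the diagram is $\cut^{k}(Y(\omega))$, which by the depth proposition is empty precisely when $k \geq \depth(Y(\omega))$, and emptiness of the diagram is exactly the condition that the string has stabilized, so the stabilization time equals $\depth(Y(\omega))$.)

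The only point requiring genuine care is the bookkeeping for strings outside the special type: one evolution step applied to a string with $\omega_1 = 0$, $\omega_n = 1$ can destroy both properties (already $01 \mapsto 10$), so one cannot remain inside $\tilde{\Omega}_n^p$ during the induction. I would dispose of this by committing to the extended definition of $Y$ on all of $\{0,1\}^n$ described above; alternatively, one can strip leading $1$'s and trailing $0$'s at each stage, using the evident fact — the same one underlying Lemma \ref{lemma:1.6} — that such stripping changes neither the stabilization time nor the associated Young diagram, thereby reducing every step to the special case. Either way the substantive content is carried entirely by the two previously proved propositions, so no further computation is required.
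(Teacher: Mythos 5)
Your proof is correct and follows essentially the same route as the paper: an induction driven by the two preceding propositions ($Y(\omega')=\cut(Y(\omega))$ and $\depth(\cut(Y))=\depth(Y)-1$), with the induction variable being $T_n^p(\omega)$ rather than the paper's $\depth(Y(\omega))$, which is an immaterial difference. Your remark that $Y(\omega)$ must be understood as defined for all of $\{0,1\}^n$, not just strings with $\omega_1=0$ and $\omega_n=1$, addresses a point the paper passes over silently, but it does not change the substance of the argument.
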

\begin{proof}
The proof follows by induction on $\depth(Y(\omega)$ using the previous two propositions. For the base case, if $\depth(Y(\omega)) = 0$, then 
$Y = \emptyset$ so $\omega$ is already stable and the stabilization time is 0 and the result holds. Now assume that for all $\omega$ with $\depth(Y(\omega)) = k - 1$, 
$T_n^p(\omega) = k-1$. Given any $\omega$ with $\depth(Y(\omega)) = k$, let $\omega^{\prime}$ be the evolution of $\omega$ by one step. Then, by the last 
two propositions we have that $\depth(Y(\omega^{\prime})) = \depth(\cut(Y(\omega))) = \depth(Y(\omega)) - 1  = k - 1$, and so by the induction hypothesis, 
$T_n^p(\omega^{\prime}) = k -1$. Since we have applied one step to get from $\omega$ to $\omega^{\prime}$, $T_n^p(\omega) = T_n^p(\omega^{\prime}) + 1 = k$. 
\end{proof}

\begin{proposition}
\label{prop:2.8}
Let $\omega \in \{0,1\}^n$, and for $0\leq k\leq n$ let $S_{k}=\sum_{i=1}^{k}(1 - 2\omega_{i})$. Let $U=U(\omega)$ be the number of $1$'s in the string $\omega$. 
In the case that $\omega_1 = 0$ and $\omega_n = 1$ we have the following explicit relationship between $S_k$ and the depth of the Young diagram 
$Y(\omega)$:
\[
\depth(Y(\omega))=U + \max_{0\leq k\leq n}S_{k} - 1
\]
\end{proposition}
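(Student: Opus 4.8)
The plan is to read $\depth(Y(\omega))$ off the geometry of the monotone lattice path $\pi(\omega)$, and then to rephrase the answer in terms of the walk $S_k$. Let $v_0,v_1,\dots,v_n$ denote the successive vertices of $\pi(\omega)$, with $v_0$ the top-left corner of the box $B$, and write $v_k=(x_k,y_k)$. By construction $x_k$ is the number of $0$'s among $\omega_1,\dots,\omega_k$, while $y_k$ equals $U$ minus the number of $1$'s among $\omega_1,\dots,\omega_k$. The first thing I would record is the identity
\[
x_k + y_k \;=\; U + S_k \qquad (0\le k\le n),
\]
which is immediate from the fact that $S_k$ is the number of $0$'s minus the number of $1$'s among $\omega_1,\dots,\omega_k$.

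Next I would establish the geometric dictionary: $(i,j)\in Y(\omega)$ if and only if there is a $k\in\{0,\dots,n\}$ with $x_k\ge i$ and $y_k\ge j$. This holds because $Y(\omega)$ is, by Definition \ref{def:2.2}, the set of unit boxes lying (weakly) below-and-left of $\pi(\omega)$, and the box with north-east corner $(i,j)$ lies below-and-left of $\pi(\omega)$ exactly when some point of $\pi(\omega)$ lies (weakly) above-and-right of $(i,j)$; since $\pi(\omega)$ uses only rightward and downward steps, such a point may always be taken to be a vertex of $\pi(\omega)$. Granting this, and using that $(i,j)\in\bN\times\bN$ forces $i,j\ge 1$, the optimal choice of $(i,j)$ for a fixed admissible vertex $v_k$ is $(i,j)=(x_k,y_k)$, so
\[
\depth(Y(\omega)) \;=\; \max\{\, i+j-1 : (i,j)\in Y(\omega)\,\} \;=\; \max\{\, x_k+y_k-1 : 0\le k\le n,\ x_k\ge 1,\ y_k\ge 1 \,\}.
\]

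Now I would identify the range of $k$ in this maximum. We have $x_k\ge 1$ exactly when a $0$ occurs among the first $k$ bits; since $\omega_1=0$, this amounts to $k\ge 1$. Similarly $y_k\ge 1$ exactly when not all of the $U$ ones occur among the first $k$ bits; since $\omega_n=1$ the last $1$ sits at position $n$, so this amounts to $k\le n-1$. Substituting the identity from the first step,
\[
\depth(Y(\omega)) \;=\; \max_{1\le k\le n-1}\big(x_k+y_k-1\big) \;=\; U-1+\max_{1\le k\le n-1} S_k .
\]
Finally, $S_0=0<1=S_1$ (because $\omega_1=0$) and $S_n=S_{n-1}-1<S_{n-1}$ (because $\omega_n=1$), so neither $k=0$ nor $k=n$ attains $\max_{0\le k\le n}S_k$; hence $\max_{1\le k\le n-1}S_k=\max_{0\le k\le n}S_k$, giving $\depth(Y(\omega))=U+\max_{0\le k\le n}S_k-1$, the claimed formula.

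I expect the main obstacle to be the geometric dictionary in the second step: turning the informal phrase ``boxes under the path'' from Definition \ref{def:2.2} into the precise statement $(i,j)\in Y(\omega)\iff\exists k\,(x_k\ge i,\ y_k\ge j)$, with due care for the weak inequalities and for the reduction from ``some point of the path is above-and-right'' to ``some vertex of the path is above-and-right''. The remaining steps --- pinning down the range $\{1,\dots,n-1\}$ using the two boundary hypotheses and then harmlessly restoring the range $\{0,\dots,n\}$ --- are elementary; it is worth noting that both hypotheses $\omega_1=0$ and $\omega_n=1$ are used precisely at that point.
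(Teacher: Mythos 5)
Your proof is correct and follows essentially the same route as the paper: both identify $\depth(Y(\omega))$ with the maximum of $i+j-1$ along the lattice path $\pi(\omega)$ and use the identity $x_k+y_k=U+S_k$ (the paper phrases this as a $45^\circ$ rotation of $\pi(\omega)$ onto the graph of $S$). You are somewhat more careful than the paper about the points of $\pi(\omega)$ with a zero coordinate and about why restricting to $1\le k\le n-1$ and then re-extending to $0\le k\le n$ is harmless, details the paper asserts or leaves as an exercise.
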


\begin{proof}
Let $\pi ( \omega )$ be the path associated to
$\omega$ in Definition \ref{def:2.2}. An elementary computation
shows that $S$ is related to $\pi ( \omega )$ by the following
formula:
\[
\{ (k,S_k) : 0 \leq k \leq n \} =
\{ (i+(U-j), i-(U-j)) : (i,j) \in \pi ( \omega ) \}.
\]
This formula merely says that (because of the specifics of how each of $S$ and $\pi ( \omega )$ are constructed from $\omega$) the set of lattice points $S( \omega ) := \{ (k,S_k) : 0 \leq k \leq n \}$ is obtained out of $\pi ( \omega )$ via $45^o$ degree rotation and dilation by $\sqrt{2}$. The verification of the formula is left as exercise to the reader. An illustration of how $\pi ( \omega )$ and $S( \omega )$ look in a concrete case is shown in the next picture, drawn for $\omega = (0,0,1,1,0,1,1,1,0,1) \in \{ 0,1 \}^{10}$.

\[
\pi(\omega) = 
\begin{aligned}
\begin{tikzpicture}[scale=0.5]
\draw[help lines] (0,0) grid (4, 6);
\draw[thick] (0,6)--(2,6)--(2,4)--(3,4)--(3,1)--(4,1)--(4,0);
\end{tikzpicture}\\
\end{aligned} \text{     }S(\omega) =
\begin{aligned}
\begin{tikzpicture}[scale=0.5]
\draw[help lines] (0,-3) grid (10, 3);
\draw[thick] (0,0)--(2,2)--(4,0)--(5,1)--(8,-2)--(9,-1)--(10,-2);
\draw[thick, <->] (0,3) -- (0,0) -- (10,0);
\draw[thick, ->] (0,0) -- (0,-3);
\draw[fill] (0,0) circle (2pt);
\end{tikzpicture}\\
\end{aligned}
\]

\noindent
Now, if $\omega_{1} = 0$ and $\omega_{n} = 1$, we know that the boundary of the Young diagram $Y(\omega)$ is precisely the path $\pi(\omega)$. (The fact that $\omega_{1} = 0$ and $\omega_{n} = 1$ is needed here because otherwise there are some points in $\pi(\omega)$ which are not included in $Y(\omega)$). Since the $\depth(Y(\omega))$ is achieved 
somewhere on its boundary, we have that:
\begin{eqnarray*}
\depth(Y(\omega)) & = & \max\left\{ i+j-1:(i,j)\in Y(\omega)\right\} \\
 & = & \max\left\{ i+j-1:(i,j)\in\pi(\omega)\right\} \\
 & = & U+\max\left\{ i-(U-j):(i,j)\in\pi(\omega)\right\}-1 \\
 & = & U+\max\left\{ S_{k}:0\leq k\leq n\right\} -1
\end{eqnarray*}
\noindent
Where the last equality follows from the map between $S$ and $\pi(\omega)$ described above.
\end{proof}

\noindent \emph{ Proof of Lemma \ref{lemma:1.2}.}
This follows immediately when we combine Proposition \ref{prop:2.7} with Proposition \ref{prop:2.8}, and also use the elementary relation $U = \frac{1}{2}\left(n - S_n\right)$. \hfill $\blacksquare$

\section{Limit Distribution in the Special Case}

In this section we aim to prove the aforementioned weak limits for the random variable $\tilde{T}_{n}^{p}$, the stabilization time for strings from the probability space $\tilde{\Omega}_n^p = \{ \omega \in \{0,1\}^n : \omega_1 = 0, \omega_n = 1\}$. We will use the result from Lemma \ref{lemma:1.2} connecting $\tilde{T}_n^p$ to the random walk associated with $\omega$,  $S_k = \sum_{i=1}^k (1-2\omega_i)$. 
\begin{lemma}
\label{lemma:3.1}
Let $X_1, X_2, \ldots, X_{n-2}$ be i.i.d random variables which take the value $-1$ with probability $p$ and $1$ with probability $1-p$. For $0\leq k\leq n-2$, let $W_k = \sum_{i=1}^k X_i$ be the random walk which takes the $X_i$'s as its steps. Then:
\[
\tilde{T}_{n}^{p}  \stackrel{d}{=} \frac{n}{2}+\max_{0\leq k\leq n-2}W_{k}-\frac{W_{n-2}}{2}
\]
\end{lemma}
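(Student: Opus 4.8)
The plan is to deduce this from Lemma~\ref{lemma:1.2} by a change of variables that strips off the forced first and last bits. Fix $\omega \in \tilde{\Omega}_n^p$ with $\omega_1 = 0$ and $\omega_n = 1$, and let $S_k = \sum_{i=1}^k (1-2\omega_i)$ be the associated walk as in Lemma~\ref{lemma:1.2}, so that
\[
\tilde{T}_n^p(\omega) = \frac{n}{2} + \max_{1 \leq k \leq n} S_k - \frac{S_n}{2} - 1.
\]
Since $\omega_1 = 0$, the first step is $+1$, i.e. $S_1 = 1$; since $\omega_n = 1$, the last step is $-1$. Writing $X_i = 1 - 2\omega_{i+1}$ for $1 \leq i \leq n-2$, these are exactly i.i.d.\ taking value $-1$ with probability $p$ and $+1$ with probability $1-p$, and $W_k = \sum_{i=1}^k X_i = S_{k+1} - S_1 = S_{k+1} - 1$ for $0 \leq k \leq n-2$.

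Next I would substitute. For the maximum: the index $k$ ranges over $1 \leq k \leq n$, and $S_k = W_{k-1} + 1$ for $1 \leq k \leq n-1$, while $S_n = W_{n-2} + 1 - 1 = W_{n-2}$ (the last step being $-1$). Because $S_0 = 0 \leq S_1 = 1$ and more importantly the running max over $1 \leq k \leq n$ can be compared with the running max over the interior: one checks $\max_{1 \leq k \leq n} S_k = \max(1 + \max_{0 \leq k \leq n-2} W_k,\ W_{n-2})$, and since $1 + \max_{0 \leq k \leq n-2} W_k \geq 1 + W_{n-2} > W_{n-2}$, the last term is dominated, giving $\max_{1 \leq k \leq n} S_k = 1 + \max_{0 \leq k \leq n-2} W_k$. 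Also $S_n/2 = W_{n-2}/2$. Plugging in:
\[
\tilde{T}_n^p(\omega) = \frac{n}{2} + 1 + \max_{0 \leq k \leq n-2} W_k - \frac{W_{n-2}}{2} - 1 = \frac{n}{2} + \max_{0 \leq k \leq n-2} W_k - \frac{W_{n-2}}{2}.
\]
This is a pointwise identity on $\tilde{\Omega}_n^p$, and since the map $\omega \mapsto (\omega_2, \ldots, \omega_{n-1})$ pushes the measure on $\tilde{\Omega}_n^p$ forward to the i.i.d.\ law of $(X_1, \ldots, X_{n-2})$, the claimed equality in distribution follows.

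The only genuinely delicate point is the bookkeeping around the endpoints of the max — making sure no off-by-one error creeps in when the range shifts from $\{1,\dots,n\}$ to $\{0,\dots,n-2\}$, and verifying that the boundary terms $S_0$, $S_n$, $W_{n-2}$ are correctly dominated rather than contributing. I expect this to be routine once one writes $S_k = 1 + W_{k-1}$ carefully and notes that $W_0 = 0$ is included in the interior max so that the constant $1$ from $S_1$ is never lost. Everything else is a direct substitution using Lemma~\ref{lemma:1.2}.
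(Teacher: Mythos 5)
Your proposal is correct and follows essentially the same route as the paper: identify $X_i = 1-2\omega_{i+1}$, observe $S_{k+1} = 1 + W_k$ and $S_n = W_{n-2}$, check that $\max_{1\leq k\leq n} S_k = 1 + \max_{0\leq k\leq n-2} W_k$, and substitute into Lemma~\ref{lemma:1.2}. You simply carry out the endpoint bookkeeping more explicitly than the paper does.
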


\begin{proof}
Make the identification that $X_i \stackrel{d}{=} (1-2\omega_{i+1})$. Since $\omega_1 = 0$, $\omega_n = 1$, we have  that $S_{k+1} \stackrel{d}{=} 1+W_k$ for $0\leq k \leq n-2$ and $S_n = 1+W_{n-2} - 1$. From this, it is clear that $\max_{1\leq k\leq n}S_{k} = 1 + \max_{0\leq k\leq n-2}W_{k}$, and the result is then immediate from Lemma \ref{lemma:1.2}.
\end{proof}

Since $W_k$ is the sum of many i.i.d. random variables, we are in a position to use tools like the central limit theorem and Donsker's theorem to find the limit distribution.  We divide the remaining results into our three settings, when $p > \frac{1}{2}$, when $p = \frac{1}{2}$, and the threshold setting $p_n = \ahalf + \frac{\lambda/2}{\sqrt{n}}$

\begin{proposition}
\label{prop:3.2}
For $p > \frac{1}{2}$, we have that, as $n \to \infty$:
\[
\frac{\max_{0\leq k \leq n-2} W_k}{\sqrt{n}} \stackrel{\p}{\to} 0
\]
\end{proposition}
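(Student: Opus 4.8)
The plan is to exploit the fact that when $p > \ahalf$ the increments $X_i$ have strictly negative mean, so the random walk $W_k$ has a negative drift and its running maximum $\max_{0 \leq k \leq n-2} W_k$ is tight (in fact has a limiting distribution). Once we know that $\max_{0\leq k \leq n-2} W_k$ is $O_{\p}(1)$, dividing by $\sqrt{n}$ immediately forces convergence to $0$ in probability. So the real content is to show the running maximum does not grow with $n$.

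First I would set $\mu = \e[X_1] = (1-p) - p = 1 - 2p < 0$. For each $m \geq 1$, I would bound
\[
\p\left( \max_{0 \leq k \leq n-2} W_k \geq m \right) \leq \p\left( \sup_{k \geq 0} W_k \geq m \right) = \p(\tau_m < \infty),
\]
where $\tau_m$ is the first time the walk reaches level $m$ or above. By a standard large deviations / Chernoff argument for random walks with negative drift, there is a constant $\theta > 0$ (coming from $\e[e^{\theta X_1}] = 1$, which has a solution $\theta>0$ since $X_1$ is bounded and has negative mean) such that $e^{\theta W_k}$ is a martingale, and the optional stopping theorem applied at $\tau_m \wedge k$ together with $W_{\tau_m} \geq m$ gives $\p(\sup_k W_k \geq m) \leq e^{-\theta m}$. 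This is the key quantitative estimate. (Alternatively, one can simply cite that for a mean-zero-shifted bounded random walk with negative drift the all-time supremum is a.s. finite with exponential tails — e.g. via the reflection-type bound or the renewal/ladder-height description — but the martingale argument is self-contained.)

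Given this, for any $\ee > 0$ and any $n$,
\[
\p\left( \frac{\max_{0\leq k \leq n-2} W_k}{\sqrt n} \geq \ee \right) = \p\left( \max_{0\leq k \leq n-2} W_k \geq \ee \sqrt n \right) \leq e^{-\theta \ee \sqrt n} \to 0
\]
as $n \to \infty$. Since $\max_{0 \leq k \leq n-2} W_k \geq W_0 = 0$, the quantity is nonnegative, so this one-sided bound is enough to conclude $\max_{0\leq k\leq n-2} W_k/\sqrt n \stackrel{\p}{\to} 0$.

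The main obstacle — really the only non-routine point — is producing the exponential tail bound $\p(\sup_k W_k \geq m) \leq e^{-\theta m}$ cleanly. The subtlety is checking that the exponential moment equation $\e[e^{\theta X_1}] = 1$ genuinely has a strictly positive root: since $X_1 \in \{-1, +1\}$, we have $\e[e^{\theta X_1}] = p e^{-\theta} + (1-p) e^{\theta}$, which equals $1$ at $\theta = 0$, has derivative $1 - 2p < 0$ at $\theta = 0$, and tends to $+\infty$ as $\theta \to \infty$, hence crosses $1$ at some $\theta_0 > 0$ (explicitly $e^{\theta_0} = p/(1-p)$). With that in hand the martingale stopping argument is completely standard, and everything else in the proof is a one-line consequence.
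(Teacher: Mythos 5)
Your proof is correct and is essentially the paper's argument: both reduce the claim to the fact that the all-time supremum of the negative-drift walk is an almost surely finite random variable with geometric tails, after which division by $\sqrt{n}$ is immediate. The paper simply cites the known geometric law of $\sup_k W_k$ (with ratio $q = \frac{1-\sqrt{1-4p(1-p)}}{2p} = \frac{1-p}{p}$), whereas you rederive the identical tail bound $\p(\sup_k W_k \geq m) \leq \left(\frac{1-p}{p}\right)^m = e^{-\theta_0 m}$ self-containedly via the exponential martingale and optional stopping.
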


\begin{proof}
$\max_{0\leq k < \infty} W_k$ is the maximum height achieved at any time by a weighted random walk, which takes steps upwards with probability $1-p < \frac{1}{2}$ and steps downwards with probability $p > \frac{1}{2}$. It is a result from elementary probability that this maximum is distributed like a \emph{geometric random variable} related to the parameter $q = \frac{1 - \sqrt{1-4p(1-p)}}{2p} < 1$, namely $\max W_k \stackrel{d}{=} Geom(1 - q) - 1$.  Since this random variable is finite almost surely, the result of the proposition is immediate from Markov's inequality.

\end{proof}

\begin{lemma}
\label{lemma:3.3}
In the setting $p > \frac{1}{2}$, $\tilde{T}_{n}^{p}$  has the following weak convergence to a Gaussian as $n\to\infty$:

\[
\frac{\tilde{T}_{n}^{p}-np}{\sqrt{n}}\Rightarrow N(0,p(1-p))
\]
\end{lemma}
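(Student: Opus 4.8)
The plan is to start from the distributional identity in Lemma \ref{lemma:3.1}, namely
\[
\tilde{T}_{n}^{p}  \stackrel{d}{=} \frac{n}{2}+\max_{0\leq k\leq n-2}W_{k}-\frac{W_{n-2}}{2},
\]
and rearrange it so that the only genuinely random contribution at scale $\sqrt{n}$ comes from $W_{n-2}$. Writing $\mu := \e[X_i] = (1-p) - p = 1 - 2p$ and $\sigma^2 := \mathrm{Var}(X_i) = 4p(1-p)$, I would subtract the centering $np$ and divide by $\sqrt{n}$ to obtain
\[
\frac{\tilde{T}_{n}^{p}-np}{\sqrt{n}}
\stackrel{d}{=} \frac{\tfrac{n}{2} - np}{\sqrt{n}} - \frac{W_{n-2}}{2\sqrt{n}} + \frac{\max_{0\leq k\leq n-2}W_{k}}{\sqrt{n}}
= -\frac{1}{2}\cdot\frac{W_{n-2} - (n-2)\mu}{\sqrt{n}} + \frac{\max_{0\leq k\leq n-2}W_{k}}{\sqrt{n}} + o(1),
\]
where the $o(1)$ collects the deterministic discrepancy between $n$ and $n-2$ in the centering (of order $1/\sqrt{n}$) together with the constant $-\mu = 2p-1$ shift coming from $(\tfrac{n}{2}-np)/\sqrt n + (n-2)\mu/(2\sqrt n)$; one checks this algebra gives exactly a vanishing constant, since $\tfrac n2 - np = \tfrac n2(1-2p) = \tfrac n2 \mu$ and $-\tfrac12(n-2)\mu/\sqrt n$ cancels the leading $\tfrac n2\mu/\sqrt n$ up to $O(1/\sqrt n)$. (I would double-check this cancellation carefully when writing the proof, as it is the one place a stray constant could sneak in.)

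The three ingredients are then: (i) the classical central limit theorem applied to the i.i.d.\ sum $W_{n-2}$, giving $\bigl(W_{n-2} - (n-2)\mu\bigr)/\sqrt{n-2} \To N(0,\sigma^2)$ and hence, after adjusting $\sqrt{n-2}$ to $\sqrt n$, $\bigl(W_{n-2}-(n-2)\mu\bigr)/\sqrt n \To N(0,\sigma^2)$; (ii) Proposition \ref{prop:3.2}, which gives $\max_{0\le k\le n-2}W_k/\sqrt n \stackrel{\p}{\to} 0$; and (iii) Slutsky's theorem to combine them, so that the sum converges in distribution to $-\tfrac12 N(0,\sigma^2) = N\bigl(0,\tfrac14\sigma^2\bigr) = N\bigl(0,p(1-p)\bigr)$. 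The sign and the factor $-\tfrac12$ do not matter for the Gaussian limit since $-\tfrac12 N(0,\sigma^2)$ and $\tfrac12 N(0,\sigma^2)$ have the same law, namely mean zero with variance $\sigma^2/4 = p(1-p)$.

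I do not expect any serious obstacle here: the heavy lifting has already been done in Lemma \ref{lemma:1.2} (which identifies $\tilde T_n^p$ with the random-walk functional) and in Proposition \ref{prop:3.2} (which kills the running-maximum term when $p>\tfrac12$, using that the maximum of a negatively-drifting walk is tight). The only thing requiring care is bookkeeping: tracking the $n$ versus $n-2$ discrepancies and verifying that the deterministic shifts cancel exactly, so that the limit is centered at $np$ and not at $np + c$ for some nonzero constant $c$. This is purely elementary and is the ``exercise using the central limit theorem'' alluded to in the remark following Lemma \ref{lemma:1.3}.
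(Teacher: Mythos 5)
Your proposal is correct and follows essentially the same route as the paper: start from the identity of Lemma \ref{lemma:3.1}, use Proposition \ref{prop:3.2} to discard the running-maximum term, apply the central limit theorem to $W_{n-2}$ (with variance $4p(1-p)$, halved in standard deviation to give $p(1-p)$), and combine via Slutsky; your cancellation of the deterministic shifts checks out, leaving only an $O(1/\sqrt{n})$ remainder. The paper's proof performs exactly this bookkeeping in a single displayed chain of equalities, so there is nothing substantive to add.
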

\begin{proof}
We have, using the results of Lemma \ref{lemma:3.1} and Proposition \ref{prop:3.2}, that:

\begin{eqnarray*}
\frac{\tilde{T}_{n}^{p}-np}{\sqrt{n}}  &=& \frac{ \left( \tilde{T}_{n}^{p}-\frac{n}{2} \right) - \frac{n}{2}\left(2p -1\right)}{\sqrt{n}} \\
&\stackrel{d}{=}& \frac{ \left( \max_{0\leq k\leq n-2}W_{k}-\frac{W_{n-2}}{2} \right) - \frac{n}{2}\left(2p -1\right)}{\sqrt{n}} \\
&=& \frac{\max_{0\leq k \leq n-2} W_k}{\sqrt{n}} - \frac{1}{2}\frac{W_{n-2} - n(1-2p)}{\sqrt{n}} \\
&\To&  0 + N(0,p(1-p))
\end{eqnarray*}
Since $\frac{\max_{0\leq k\leq n-2}W_{k}}{\sqrt{n}} \To 0$ by Proposition \ref{prop:3.2} and since $W_{n-2}$ is the sum of the i.i.d. random variables $X_i$ with mean  $1-2p$ and variance $4p(1-p)$, so by the Central Limit Theorem, we have weak convergence to a Gaussian: $\frac{W_{n} - n(1-2p)}{\sqrt{n}} \To N(0,4p(1-p))$.
\end{proof}

\begin{lemma}
\label{lemma:3.4}
Let $B_t, t\in [0,1]$ be a Brownian motion and let $M_t = \max_{s\leq t} B_s$ be its running maximum. Then, in the case that $p =\frac{1}{2}$ we have the following weak limit:
\[
\frac{\tilde{T}_n^\ahalf - \frac{n}{2}}{\sqrt{n}} \To M_1 - \ahalf B_1
\]
\end{lemma}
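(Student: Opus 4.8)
The plan is to deduce this from Lemma \ref{lemma:3.1} together with Donsker's invariance principle. By Lemma \ref{lemma:3.1}, when $p = \ahalf$ the steps $X_i$ are mean-zero, variance-one i.i.d.\ random variables (they take values $\pm 1$ with probability $\ahalf$ each), and
\[
\frac{\tilde{T}_n^\ahalf - \frac{n}{2}}{\sqrt{n}} \stackrel{d}{=} \frac{\max_{0\leq k\leq n-2}W_k}{\sqrt{n}} - \frac{1}{2}\frac{W_{n-2}}{\sqrt{n}}.
\]
So it suffices to show the right-hand side converges weakly to $M_1 - \ahalf B_1$.

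First I would introduce the linearly interpolated, diffusively rescaled walk $B^{(n)}_t := W_{\lfloor nt \rfloor}/\sqrt{n}$ (with the usual interpolation between lattice times) for $t \in [0,1]$, viewed as a random element of $C[0,1]$. By Donsker's theorem, $B^{(n)} \To B$ in $C[0,1]$, where $B$ is standard Brownian motion. Next I would observe that the functional $\Phi : C[0,1] \to \bR$ given by $\Phi(f) = \max_{t\in[0,1]} f(t) - \ahalf f(1)$ is continuous with respect to the sup-norm topology: the running-maximum map $f \mapsto \max_{t\in[0,1]} f(t)$ is $1$-Lipschitz for the sup norm, and evaluation at $t=1$ is continuous, so $\Phi$ is continuous. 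The continuous mapping theorem then gives $\Phi(B^{(n)}) \To \Phi(B) = M_1 - \ahalf B_1$.

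The remaining point is to check that $\Phi(B^{(n)})$ differs from $\frac{\max_{0\leq k\leq n-2}W_k}{\sqrt{n}} - \frac{1}{2}\frac{W_{n-2}}{\sqrt{n}}$ by a quantity that vanishes in probability, so that the two have the same weak limit. There are two small discrepancies to absorb: (i) the maximum over the continuous interpolation on $[0,1]$ equals the maximum over the lattice points $W_0, \dots, W_{n}$ divided by $\sqrt{n}$ (interpolation does not create new extrema of a piecewise-linear path), and replacing $\max_{0\le k\le n}$ by $\max_{0\le k\le n-2}$ changes nothing relevant since $W_{n-1}, W_n$ differ from $W_{n-2}$ by at most $2$, hence by $O(1/\sqrt n)$ after scaling; similarly $W_n/\sqrt n - W_{n-2}/\sqrt n \to 0$. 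So $\Phi(B^{(n)}) - \bigl(\frac{\max_{0\leq k\leq n-2}W_k}{\sqrt{n}} - \frac{1}{2}\frac{W_{n-2}}{\sqrt{n}}\bigr) \to 0$ in probability, and Slutsky's theorem finishes the argument.

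I expect the only real subtlety — and it is minor — to be the bookkeeping in the last paragraph: being careful that the interpolation endpoint and the index cutoff at $n-2$ versus $n$ genuinely contribute $o_{\p}(1)$, and invoking the standard fact (a consequence of Slutsky / the converging-together lemma) that adding an $o_{\p}(1)$ term does not change the weak limit. Everything else is a direct application of Donsker's theorem and the continuous mapping theorem to the manifestly continuous functional $\Phi$.
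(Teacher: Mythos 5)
Your proposal is correct and follows essentially the same route as the paper: Lemma \ref{lemma:3.1}, Donsker's theorem for the linearly interpolated walk, and the continuous mapping theorem applied to the sup-norm-continuous functional $f \mapsto \max_{t\in[0,1]} f(t) - \ahalf f(1)$. The only cosmetic difference is that you absorb the $n-2$ versus $n$ index mismatch via an $o_{\p}(1)$/Slutsky argument, whereas the paper simply reindexes ($n \mapsto n+2$); both are fine.
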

\begin{proof}
We know that $\tilde{T}_{n}^{p}  \stackrel{d}{=} \frac{n}{2}+\max_{0\leq k\leq n-2}W_{k}-\frac{W_{n-2}}{2}$, so it suffices (by replacing $n$ with $n+2$) to show that:
\[
 \frac{\max_{0\leq k\leq n}W_{k}-\ahalf W_{n}}{\sqrt{n}} \To M_1 - \ahalf B_1
\]

This is a  direct application of Donsker's theorem  which stipulates weak convergence of the random walk to a Brownian motion when treated as a piecewise linear function under the correct scaling. Specifically, this says that when $X_k = \pm 1$ with probability $\ahalf$ then the random walk $L^n : [0,1] \to \mathbb{R}$ by $L^n(t) = \frac{1}{\sqrt{n}}\left[\sum_{k=1}^{\lfloor n t \rfloor}X_k + \left(t - \frac{ \lfloor n t\rfloor}{n} \right) X_{\lfloor n t \rfloor + 1}\right]$ has $L^n(\cdot) \To B(\cdot)$ as $n\to \infty$ in the sense of weak convergence on $C[0,1]$ (see e.g. Section 8 of \cite{B00}). This is exactly the setting we are in with $L^n(\frac{k}{n}) = \frac{1}{\sqrt{n}} \sum_{i=1}^k X_i = \frac{1}{\sqrt{n}} W_k$. Now, let $h:C[0,1]\to\mathbb{R}$ by $h(f(\cdot))=\sup_{t\in[0,1]}f(t)-\frac{1}{2}f(1)$. This is a continuous function on $C[0,1]$ with the sup norm, and so it respects weak limits.  The convergence $\frac{\max_{0\leq k\leq n}W_{k}-\ahalf W_{n}}{\sqrt{n}} = h(L^n(\cdot)) \To h(B(\cdot)) = M_1 - \ahalf B_1$ is exactly what is desired. 

\end{proof}

\begin{lemma}
\label{lemma:3.5}
 Let $B^{\lambda}_t, t\in [0,1]$ be a Brownian motion with drift $\lambda$ and let $M^{\lambda}_t = \max_{s\leq t} B^{\lambda}_s$ be its running maximum. Then, in the threshold setting that $p_n =\frac{1}{2} + \frac{\lambda / 2}{\sqrt{n}}$ we have the following weak limit:
\[
\frac{\tilde{T}_n^{p_n} - \frac{n}{2}}{\sqrt{n}} \To M^{\lambda}_1 - \ahalf B^{\lambda}_1
\]
\end{lemma}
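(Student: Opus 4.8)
The plan is to mimic the proof of Lemma \ref{lemma:3.4}, replacing plain Donsker's theorem with its version for triangular arrays converging to a Brownian motion with drift. By Lemma \ref{lemma:3.1} (with $n$ replaced by $n+2$) it suffices to show
\[
\frac{\max_{0\leq k\leq n}W_{k}^{(n)}-\ahalf W_{n}^{(n)}}{\sqrt{n}} \To M^{\lambda}_1 - \ahalf B^{\lambda}_1,
\]
where now $W^{(n)}_k = \sum_{i=1}^k X_i^{(n)}$ and the steps $X_i^{(n)}$ take value $-1$ with probability $p_n$ and $+1$ with probability $1-p_n$, so that they form a triangular array: $\e[X_i^{(n)}] = 1-2p_n = -\lambda/\sqrt{n}$ and $\mathrm{Var}(X_i^{(n)}) = 4p_n(1-p_n) \to 1$. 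The point is that the deterministic drift $\e[W^{(n)}_{\lfloor nt\rfloor}] \approx -\lambda t \sqrt{n}$, when divided by $\sqrt{n}$, contributes a linear-in-$t$ term $-\lambda t$. Note a sign subtlety: the $X_i$ here are $+1$ with probability $1-p_n < \ahalf$, so the walk $W^{(n)}$ has drift $-\lambda$; but the identification in Lemma \ref{lemma:3.1} is precisely $X_i \stackrel{d}{=} 1 - 2\omega_{i+1}$, and the functional $h(f)=\sup_t f(t) - \ahalf f(1)$ applied to a path converging to $-B^\lambda_{\cdot}$ gives $\sup_t(-B^\lambda_t) + \ahalf B^\lambda_1$; using the reflection symmetry $-B^\lambda \stackrel{d}{=} B^{-\lambda}$ and the fact that $\lambda>0$ was chosen by convention, one checks the resulting law is $M^\lambda_1 - \ahalf B^\lambda_1$ as stated. (I would keep careful track of this orientation since it is the easiest place to introduce an error.)

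The key steps in order are: (1) Recall the functional version of the invariance principle for triangular arrays — the process $L^n(t) = \frac{1}{\sqrt n}\big[\sum_{k=1}^{\lfloor nt\rfloor} X_k^{(n)} + (nt - \lfloor nt\rfloor)X^{(n)}_{\lfloor nt\rfloor+1}\big]$ converges weakly in $C[0,1]$ to a Brownian motion with drift equal to $\lim \sqrt{n}\,\e[X_1^{(n)}] = -\lambda$ and diffusion coefficient $\lim \mathrm{Var}(X_1^{(n)}) = 1$. This is standard and can be cited (e.g. \cite{B00}); alternatively it follows by writing $X_i^{(n)} = (X_i^{(n)} - \e X_i^{(n)}) + \e X_i^{(n)}$, applying ordinary Donsker to the centered part, and adding the deterministic linear drift, invoking that $f \mapsto f + (\text{fixed continuous } g)$ is continuous on $C[0,1]$. (2) Identify $L^n(k/n) = \frac{1}{\sqrt n} W_k^{(n)}$, so that $\max_{0\le k\le n} W_k^{(n)}/\sqrt n = \sup_{t\in[0,1]} L^n(t)$ since $L^n$ is piecewise linear (its max over $[0,1]$ is attained at a breakpoint). (3) Apply the continuous functional $h(f) = \sup_{t\in[0,1]} f(t) - \ahalf f(1)$, which is continuous for the sup norm on $C[0,1]$, to get $h(L^n) \To h(\tilde B)$ where $\tilde B$ is Brownian motion with drift $-\lambda$. (4) Translate $h(\tilde B) = \sup_t \tilde B_t - \ahalf \tilde B_1$ into the claimed form $M^\lambda_1 - \ahalf B^\lambda_1$ using the symmetry $\tilde B \stackrel{d}{=} -B^\lambda$, so $h(\tilde B) = \sup_t(-B^\lambda_t) - \ahalf(-B^\lambda_1) = -\min_t B^\lambda_t + \ahalf B^\lambda_1$; I would then reconcile this with the statement (which uses $M^\lambda$, the running \emph{max}) by checking the orientation in Lemma \ref{lemma:3.1} once more, confirming that with the conventions in place the final answer is indeed $M^\lambda_1 - \ahalf B^\lambda_1$ with $\lambda > 0$.

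I expect the main obstacle to be purely bookkeeping rather than conceptual: making sure the sign of the drift and the direction of the extremal functional (max versus min) are handled consistently, since the random walk $W^{(n)}$ built from the $X_i = 1-2\omega_{i+1}$ has \emph{negative} drift when $p_n > \ahalf$, whereas the target random variable $M_1^\lambda - \ahalf B_1^\lambda$ is phrased in terms of a Brownian motion with \emph{positive} drift $\lambda$. The cleanest way to avoid confusion is to carry the negative-drift limit all the way through step (3) and only at the very end apply the reflection $x \mapsto -x$, noting that it maps the running maximum to the negative of the running minimum and fixes the overall law in the way needed. Everything else — continuity of $h$, the invariance principle for triangular arrays, piecewise linearity of $L^n$ — is standard and exactly parallels Lemma \ref{lemma:3.4}. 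Once this lemma is established, combining it with the explicit density computation of $M^\lambda_1 - \ahalf B^\lambda_1$ via Girsanov (as outlined in the remark after Lemma \ref{lemma:1.3}) yields $\nu_\lambda$, completing the threshold case of Lemma \ref{lemma:1.3}.
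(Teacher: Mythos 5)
Your overall strategy matches the paper's: prove an invariance principle for the drifted walk and push it through the continuous functional $h(f)=\sup_{t}f(t)-\ahalf f(1)$. Two points of comparison. For the invariance principle itself the paper does not use a triangular-array Donsker theorem; it computes the generator $A_n f$ of the rescaled chain, checks $\lim_n \frac{1}{1/n}A_nf = \ahalf f''+\lambda f'$, and cites Stroock--Varadhan. Your decomposition $X_i^{(n)}=(X_i^{(n)}-\e X_i^{(n)})+\e X_i^{(n)}$ --- centered part handled by the functional CLT for triangular arrays (the increments are bounded with variance $\to 1$, so Lindeberg is trivial), deterministic part converging uniformly to a linear drift, recombined by continuity of $f\mapsto f+g$ --- is an equally valid and arguably more elementary route to $L^n\To(\text{BM with drift})$. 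Steps (2) and (3) (piecewise linearity, continuity of $h$) are fine and identical to Lemma \ref{lemma:3.4}.

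The one genuine gap is your step (4). Your sign bookkeeping is correct: under the convention of Lemma \ref{lemma:3.1} ($X_i=1-2\omega_{i+1}$, hence $X_i=-1$ with probability $p_n>\ahalf$) the walk has drift $-\lambda$, and the continuous mapping step delivers $\sup_t\tilde B_t-\ahalf\tilde B_1$ with $\tilde B$ of drift $-\lambda$, i.e. $-\inf_t B^\lambda_t+\ahalf B^\lambda_1$ after reflection. At that point you write that one ``checks the orientation once more'' and ``confirms'' the law is $M^\lambda_1-\ahalf B^\lambda_1$ --- but there is no orientation left to recheck: the walk genuinely has negative drift, and the identity $-\inf_t B^\lambda_t+\ahalf B^\lambda_1\stackrel{d}{=}M^\lambda_1-\ahalf B^\lambda_1$ is a true statement that needs an argument. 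The clean one is time reversal: $\hat B_t:=B^\lambda_1-B^\lambda_{1-t}$ is again a Brownian motion with drift $+\lambda$, and pathwise $\sup_t\hat B_t-\ahalf\hat B_1=(B^\lambda_1-\inf_t B^\lambda_t)-\ahalf B^\lambda_1=-\inf_t B^\lambda_t+\ahalf B^\lambda_1$, which gives the distributional identity. Insert that and your proof closes. (Incidentally, the paper's own proof asserts $X_j=+1$ with probability $p_n$, which is inconsistent with Lemma \ref{lemma:3.1} and lands on the positive-drift form without comment; your version, once the time-reversal step is supplied, is the more careful one.)
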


\begin{proof} Following the argument of the previous lemma, it suffices to show that the piecewise linear random walk  $L^n : [0,1] \to \mathbb{R}$ by $L^n(t) = \frac{1}{\sqrt{n}}\left[\sum_{k=1}^{\lfloor n t \rfloor}X_k + \left(t - \frac{ \lfloor n t\rfloor}{n} \right) X_{\lfloor n t \rfloor + 1}\right]$ has $L^n(\cdot) \To B^{\lambda}(\cdot)$ as $n\to \infty$. This is an exercise in the theory of diffusion processes. In the threshold setting, $X_j = 1$ with probability $\ahalf + \frac{\lambda / 2}{\sqrt {n}}$ and $X_j = -1$ with probability $\ahalf - \frac{\lambda / 2}{\sqrt{n}}$ so Donsker's theorem does not directly apply. Instead one can examine the generator of this Markov chain, $A_n f(x) = \int \left(f(y) - f(x)\right) \Pi_n (x,dy)$ where $\Pi_n(x,\cdot)$ is the probability density of $L^n(t+\frac{1}{n})$ given that $L^{n}(t) = x$. In our case, since $L^n(t+\frac{1}{n})= L^n(t) + \frac{1}{\sqrt{n}}X_k$ we see from the distribution of $X_k$ that: 
\[
A_n f(x) = \left(\ahalf + \frac{\lambda / 2}{\sqrt {n}}\right)\left(f(x + \frac{1}{\sqrt{n}}) - f(x)\right) +  \left(\ahalf - \frac{\lambda / 2}{\sqrt {n}}\right)\left(f(x - \frac{1}{\sqrt{n}}) - f(x)\right)
\]
 It is then easily verified using the definition of a derivative that $\lim_{n\to\infty} \frac{1}{1/n} A_n f(x) = \ahalf f^{\prime \prime}(x) + \lambda f^{\prime}(x)$. This is exactly the generator for a Brownian motion with drift $\lambda$! This is enough to conclude that $L^n(\cdot) \To B^\lambda(\cdot)$, see \cite{SV} Section 11.2.
\end{proof}

\begin{lemma}
\label{lemma:3.6}
$M_1 - \ahalf B_1 \stackrel{d}{=} \halfchithree $ with probability density function:
\[
d\halfchithree = \frac{8\sqrt{2}}{\sqrt{\pi}}x^{2}\exp\left(-2x^{2}\right) dx \text{ for } x>0
\]
\end{lemma}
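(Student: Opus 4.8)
The plan is to compute the joint law of $(M_1, B_1)$ for standard Brownian motion on $[0,1]$ and then push forward under the map $(m,b) \mapsto m - \tfrac12 b$. The key input is the classical reflection principle: for $m \geq 0$ and $m \geq b$,
\[
\p(M_1 \geq m,\, B_1 \leq b) = \p(B_1 \geq 2m - b),
\]
which follows because a path reaching level $m$ and ending below $b$ is in bijection (via reflection after the first hitting time of $m$) with a path ending above $2m-b$. Differentiating in $m$ and $b$ gives the joint density of $(M_1,B_1)$ on the region $\{m \geq 0,\, m \geq b\}$, namely
\[
f_{M_1,B_1}(m,b) = \frac{2(2m-b)}{\sqrt{2\pi}}\exp\!\left(-\frac{(2m-b)^2}{2}\right).
\]

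Next I would change variables to $x = m - \tfrac12 b$ (the quantity of interest) together with an auxiliary variable, say $m$ itself, integrate out the auxiliary variable over the range permitted by the constraints $m \geq 0$ and $m \geq b = 2(m-x)$, i.e. $m \leq 2x$, hence $m \in [0, 2x]$ for $x > 0$. Note $2m - b = 2m - 2(m-x) = 2x$, so the exponential factor $\exp(-(2m-b)^2/2) = \exp(-2x^2)$ does not depend on $m$ at all, which makes the integral elementary: the integrand is (up to constants and the Jacobian of the substitution) linear in $m$, so integrating over $[0,2x]$ produces a factor proportional to $x^2$, and one is left with $d(M_1 - \tfrac12 B_1)(x) = C x^2 e^{-2x^2}\,dx$ for $x > 0$. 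The constant $C$ is then pinned down by requiring the density to integrate to $1$ (using $\int_0^\infty x^2 e^{-2x^2}dx = \tfrac{\sqrt{2\pi}}{16}$), giving $C = \tfrac{8\sqrt2}{\sqrt\pi}$, matching the claimed formula.

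As a consistency check and alternative route, I would verify the identification $M_1 - \tfrac12 B_1 \stackrel{d}{=} \tfrac12\sqrt{Z_1^2+Z_2^2+Z_3^2}$ with $Z_i$ i.i.d.\ standard normal: a $\chi_3$-distributed variable $R$ has density $\sqrt{2/\pi}\, r^2 e^{-r^2/2}$ on $r>0$, so $\tfrac12 R$ has density $\tfrac{8}{\sqrt{2\pi}} x^2 e^{-2x^2} = \tfrac{8\sqrt2}{\sqrt\pi\sqrt{2}\sqrt{2}}\cdot\sqrt 2\, x^2 e^{-2x^2}$, which one checks equals $\tfrac{8\sqrt2}{\sqrt\pi} x^2 e^{-2x^2}$ after simplifying — confirming both descriptions agree. (This connection to $\chi_3$ is the reason for the notation, and it ties in with the well-known fact that $2M_1 - B_1$ is a Bessel(3) bridge endpoint, but the self-contained reflection-principle computation above is all that is needed.)

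The only real obstacle is bookkeeping: getting the region of integration and the Jacobian exactly right in the change of variables, and being careful that the reflection-principle identity is being differentiated over the correct domain (the joint density is supported on $m \geq \max(0,b)$, and one must check the boundary cases $m = 0$ and $m = b$ contribute no atoms). Once the domain is handled correctly, the fact that $2m - b$ collapses to the constant $2x$ makes everything else routine, so I would present the reflection principle carefully and then do the substitution in one clean display.
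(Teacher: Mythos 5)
Your proposal is correct and follows essentially the same route as the paper: both start from the reflection-principle joint density $f_{M_1,B_1}(m,b)=\frac{2(2m-b)}{\sqrt{2\pi}}e^{-(2m-b)^2/2}$ and integrate it along the line $m-\tfrac12 b=x$, exploiting the fact that $2m-b$ collapses to the constant $2x$ there (the paper parametrizes the line by $b\in[-2x,2x]$ while you use $m\in[0,2x]$ with the corresponding Jacobian, which is an immaterial difference). The only nitpick is that the integrand is constant in $m$, not linear, but this does not affect the conclusion.
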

\begin{proof}
We verify this by computing the density of $M_1 - \ahalf B_1$. This is just a computation using the joint density for Brownian motion and its maximum, which is readily calculated using the reflection principle -- see for instance \cite{KS98} pg 95. The joint density function for Brownian motion and its maximum is:
\[  
\rho\left(M_{T}=b,B_{T}=a\right)=\frac{2\left(2b-a\right)}{\sqrt{2\pi}T^{\frac{3}{2}}}\exp\left(-\frac{\left(2b-a\right)^{2}}{2T}\right)
\]
 for $b>a$, $b>0$, and $0$ otherwise. Now to get the density function for $M_1 - \ahalf B_1$, one just integrates the joint density for $B_t$ and $M_t$ along a line:

\begin{eqnarray*}
\rho \left(M_1 -\ahalf B_1 = x\right) &=& \intop_{-2x}^{2x}\rho\left(M_{1}=\frac{y}{2}+x,B_{1}=y\right)dy\\
 & = & \sqrt{\frac{2}{\pi}}\intop_{-2x}^{2x}\left((y+2x)-y\right)\exp\left(-\frac{\left((y+2x)-y\right)}{2}^{2}\right)dy\\
 & = & \frac{8\sqrt{2}}{\sqrt{\pi}}x^{2}\exp\left(-2x^{2}\right)
\end{eqnarray*}
\end{proof} 

\begin{lemma}
\label{lemma:3.7}
 $M^{\lambda}_1 - \ahalf B^{\lambda}_1 \stackrel{d}{=} \nu_\lambda $ with probability density function:
\[
d\nu_\lambda(x)= \frac{4\sqrt{2}}{\lambda \sqrt{\pi}} e^{-\ahalf \lambda^2} \sinh (2 \lambda x) x e^{-2x^{2}} dx \text{ for } x>0
\]
\end{lemma}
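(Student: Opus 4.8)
The plan is to mimic the proof of Lemma \ref{lemma:3.6}, replacing the joint density of $(M_1, B_1)$ for standard Brownian motion by the corresponding joint density for Brownian motion with drift $\lambda$, and then to integrate this density along the line $\{ M_1 = \frac{y}{2} + x,\ B_1 = y \}$ exactly as before. The key input is the joint density of $(M_T^\lambda, B_T^\lambda)$. The cleanest way to obtain this is via the Girsanov theorem (Cameron--Martin): if $B_t^\lambda = B_t + \lambda t$ is a Brownian motion with drift $\lambda$, then for any bounded functional one has
\[
\e\bigl[ F(B_\cdot^\lambda) \bigr] = \e\bigl[ F(B_\cdot)\, e^{\lambda B_1 - \frac{1}{2}\lambda^2} \bigr],
\]
so the joint law of $(M_1^\lambda, B_1^\lambda)$ is obtained from the driftless joint law by multiplying by the Radon--Nikodym factor $e^{\lambda a - \frac12 \lambda^2}$ evaluated at $B_1 = a$. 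Using the driftless density from Lemma \ref{lemma:3.6}, this gives
\[
\rho\bigl( M_1^\lambda = b,\ B_1^\lambda = a \bigr) = \frac{2(2b-a)}{\sqrt{2\pi}}\, e^{-\frac{(2b-a)^2}{2}}\, e^{\lambda a - \frac12 \lambda^2}
\quad \text{for } b > \max(a,0),
\]
and $0$ otherwise.

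Next I would substitute $b = \frac{y}{2} + x$, $a = y$, so that $2b - a = 2x$ and the constraint $b > \max(a,0)$ becomes $-2x < y < 2x$ (valid for $x > 0$), and integrate over $y$:
\[
d\nu_\lambda(x) = \intop_{-2x}^{2x} \frac{2(2x)}{\sqrt{2\pi}}\, e^{-2x^2}\, e^{\lambda y - \frac12 \lambda^2}\, dy
= \frac{4x}{\sqrt{2\pi}}\, e^{-2x^2 - \frac12 \lambda^2} \intop_{-2x}^{2x} e^{\lambda y}\, dy.
\]
The inner integral is $\frac{1}{\lambda}(e^{2\lambda x} - e^{-2\lambda x}) = \frac{2}{\lambda}\sinh(2\lambda x)$, and collecting constants ($\frac{4}{\sqrt{2\pi}} \cdot \frac{2}{\lambda} = \frac{8}{\lambda\sqrt{2\pi}} = \frac{4\sqrt 2}{\lambda\sqrt\pi}$) yields precisely
\[
d\nu_\lambda(x) = \frac{4\sqrt 2}{\lambda\sqrt\pi}\, e^{-\frac12 \lambda^2}\, \sinh(2\lambda x)\, x\, e^{-2x^2}\, dx \quad \text{for } x > 0,
\]
as claimed. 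As a sanity check one can verify $\lambda \to 0$ recovers the $\halfchithree$ density of Lemma \ref{lemma:3.6} (using $\sinh(2\lambda x) \sim 2\lambda x$) and confirm the density integrates to $1$.

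The main obstacle is purely bookkeeping: one must be careful that the Girsanov change of measure is being applied on the path space $C[0,1]$ so that the functional $(M_1, B_1)$ — which depends on the whole path — is handled correctly, and that the Radon--Nikodym derivative $e^{\lambda B_1 - \frac12 \lambda^2}$ indeed depends only on the endpoint $B_1$ (this is the special feature of \emph{constant} drift). One should also double-check the support constraints after the change of variables, since the factor $2b - a$ in the joint density vanishes on part of the boundary and the region of integration in $y$ must match $b > 0$ and $b > a$ simultaneously; for $x>0$ these collapse to the single symmetric interval $(-2x, 2x)$, which is what makes the $\sinh$ appear.
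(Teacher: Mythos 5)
Your proposal is correct and follows essentially the same route as the paper: both apply the Girsanov/Cameron--Martin change of measure, observe that the Radon--Nikodym factor $e^{\lambda B_1 - \ahalf\lambda^2}$ depends only on the endpoint, and then integrate the driftless joint density of $(M_1,B_1)$ times this factor along the line $b=\tfrac{y}{2}+x$, $a=y$ over $y\in(-2x,2x)$, producing the $\sinh$ term. Your computation and the resulting constants all check out.
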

\begin{proof}
By the Girsanov theorem, the  Brownian motion with drift $B^{\lambda}_t$ is absolutely continuous with respect to the drift free Brownian motion $B_t$ and the measures are related by the likelihood function $\exp \left(\lambda B_1 - \ahalf \lambda^2 \right)$ -- see for instance \cite{KS98} pg 190. Since this likelihood function depends only on the final position $B_1$, the calculation proceeds in exactly the same way as in Lemma \ref{lemma:3.6}, with this additional factor under the integral sign:
\begin{eqnarray*}
\rho \left(M^{\lambda}_1 -\ahalf B^{\lambda}_1 = x\right) &=& \intop_{-2x}^{2x}\rho\left(M_{1}=\frac{y}{2}+x,B_{1}=y\right)\exp \left(\lambda y - \ahalf \lambda^2 \right) dy\\
 & = & \sqrt{\frac{2}{\pi}}\intop_{-2x}^{2x}\left(2x\right)\exp\left(-\frac{(2x)^{2}}{2}\right) \exp \left(\lambda y - \ahalf \lambda^2 \right) dy\\
 & = & \frac{4\sqrt{2}}{\lambda \sqrt{\pi}} e^{-\ahalf \lambda^2} \sinh (2 \lambda x) x e^{-2x^{2}} 
\end{eqnarray*}
\end{proof}

\emph{Proof of Lemma \ref{lemma:1.3}}
In the setting $p > \ahalf$, the required convergence follows from Lemma \ref{lemma:3.3}. Likewise, the case $p = \ahalf$ is covered by Lemmas \ref{lemma:3.4} and \ref{lemma:3.6}, while the threshold setting $p_n = \ahalf + \frac{\lambda/2}{\sqrt{n}}$ is covered by Lemmas \ref{lemma:3.5} and \ref{lemma:3.7} \hfill $\blacksquare$

\section{Limit Distribution in the General Case}

So far, we have results in the special case that the string $\omega$ has $\omega_1 = 0$ and $\omega_n = 1$. In this section, we will bootstrap off of these results to see that we have the same limit in the general case where there is no restriction on $\omega_1$ or $\omega_n$.

\begin{lemma}
\label{lemma:4.1}
Let $\tilde{\mu}_{r}^{p}$ be the law of the special case random variable
$\tilde{T}_{r}^{p}$ and let $\mu_{n}^{p}$ be the law of the random
variable $T_{n}^{p}$. For $p \neq \frac{1}{2}$ the measures are related by:
\[
\mu_{n}^{p}=\frac{p^{n+1}-(1-p)^{n+1}}{2p-1}\delta_{0}+\sum_{r=0}^{n-2}\frac{(1-p)p^{n-r}-p(1-p)^{n-r}}{2p-1}\tilde{\mu}_{r+2}^{p}
\]
\end{lemma}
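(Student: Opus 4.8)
The plan is to partition the probability space $\Omega_n^p$ according to the maximal block of leading $1$'s and the maximal block of trailing $0$'s of a string, to observe that deleting these two blocks does not change the stabilization time, and then simply to add up the corresponding product-measure weights.

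First I would record the elementary structural fact that stabilization time is unaffected by prepending $1$'s or appending $0$'s. Write $\omega = 1^a \sigma 0^b$. A leading $1$ can never play the role of the ``$1$'' in an occurrence of ``$01$'' (there is no bit to its left), and a trailing $0$ can never play the role of the ``$0$'' in such an occurrence (there is no bit to its right); moreover, if $\sigma$ is non-empty it begins with $0$ and ends with $1$, so across the two junctions $1^a \mid \sigma$ and $\sigma \mid 0^b$ one only ever sees ``$10$'', never ``$01$''. Consequently one step of the evolution sends $1^a \sigma 0^b$ to $1^a \sigma' 0^b$, where $\sigma'$ is the one-step evolution of $\sigma$; iterating, $T_n^p(1^a\sigma 0^b)$ equals the stabilization time of $\sigma$ alone.

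Next I would set up the decomposition. Every $\omega \in \{0,1\}^n$ is uniquely $1^a \sigma 0^b$ with $a$ the number of leading $1$'s and $b$ the number of trailing $0$'s: either $\sigma$ is empty, i.e. $\omega = 1^a 0^b$ with $a+b=n$ (these are the $n+1$ already-stable strings, each with stabilization time $0$), or $\sigma$ is non-empty with $\sigma_1 = 0$ and $\sigma_{|\sigma|} = 1$, in which case, writing $|\sigma| = r+2$, we have $a+b = n-r-2$ with $0 \le r \le n-2$. Under the product measure the weight of $1^a\sigma 0^b$ equals $p^a(1-p)^b$ times the $\Omega_{r+2}^p$-weight of $\sigma$, and the latter equals $p(1-p)$ times the $\tilde\Omega_{r+2}^p$-weight of $\sigma$ (the first and last bits of $\sigma$, which are deterministic $0$ and $1$ in $\tilde\Omega_{r+2}^p$, contribute the extra factor $(1-p)p$). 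Since $T_n^p(1^a\sigma 0^b) = \tilde T_{r+2}^p(\sigma)$, summing over $\sigma$ first shows that the contribution to $\mu_n^p$ from cores of length $r+2$ is $\big(\sum_{a+b=n-r-2} p^a(1-p)^b\big)\, p(1-p)\, \tilde\mu_{r+2}^p$, while the contribution from empty cores is $\big(\sum_{a+b=n} p^a(1-p)^b\big)\delta_0$.

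Finally I would evaluate the finite geometric sums. For $p \ne \ahalf$ one has $\sum_{a=0}^m p^a(1-p)^{m-a} = \frac{p^{m+1}-(1-p)^{m+1}}{2p-1}$. Taking $m=n$ gives the stated coefficient $\frac{p^{n+1}-(1-p)^{n+1}}{2p-1}$ of $\delta_0$, and taking $m = n-r-2$ and multiplying by $p(1-p)$ gives $\frac{p(1-p)\big(p^{n-r-1}-(1-p)^{n-r-1}\big)}{2p-1} = \frac{(1-p)p^{n-r}-p(1-p)^{n-r}}{2p-1}$, the stated coefficient of $\tilde\mu_{r+2}^p$. There is no genuine obstacle; the only points requiring care are the invariance claim (checking that no ``$01$'' is created at the two junctions, which is exactly where the conditions $\sigma_1 = 0$ and $\sigma_{|\sigma|} = 1$ enter) and the bookkeeping factor $p(1-p)$ needed to convert $\Omega$-weights of cores into $\tilde\Omega$-weights.
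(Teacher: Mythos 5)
Your proposal is correct and follows essentially the same route as the paper: the same decomposition of $\{0,1\}^n$ by leading $1$'s and trailing $0$'s, the same identification of the core's stabilization time with $\tilde T_{r+2}^p$, and the same geometric sums (the paper's index $i$ is your $a$, and its $p^{i+1}(1-p)^{n-r-i-1}$ is exactly your $p^a(1-p)^b\cdot p(1-p)$). The only difference is that you spell out the junction argument for why prepending $1$'s and appending $0$'s preserves the stabilization time, which the paper treats as obvious.
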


\begin{proof}
We begin by splitting the space $\{ 0, 1 \}^n$ into disjoint subsets. For stable $\omega \in \{ 0, 1 \}^n$, all ``1"s in $\omega$ must lie to the left of all ``0"s in $\omega$. Consequently, there are $n+1$ stable strings in $\{ 0, 1 \}^n$ for which the time to stabilization is zero. For each $1 \le i \le n$, there is precisely one such string with exactly 
$i$ ``1"s. These strings contribute the following value to $\mu_{n}^{p}$.

\begin{equation}
\sum_{i=0}^{n} p^{i}(1-p)^{n-i}\delta_{0} =  \frac{p^{n+1} - (1-p)^{n + 1}}{2p-1}\delta_{0}
\end{equation}

\noindent
For non-stable $\omega \in \{ 0, 1 \}^n$, we will introduce $r$, the number of elements of $\omega$ lying between the first ``0" and the last ``1", so that we may write $\omega = (1, ... ,1,0,x_1, ... ,x_{r},1,0, ... ,0)$, with $x \in \{ 0, 1 \}^{r}$. The time until stabilization of $\omega$ is the time until stabilization of $(0,x_1,x_2...x_{r},1)$. For given $r$, the distribution of these times is precisely distributed like $\tilde{T}_{r+2}^{p}$ because this string is in the special case. These strings contribute the following value to $\mu_{n}^{p}$ for a given $r$:

\begin{equation}
\sum_{i=0}^{n - r - 2} p^{i+1}(1-p)^{n-r-i-1} \tilde{\mu}_{r+2}^{p} =  \frac{(1-p)p^{n-r} - p(1-p)^{n-r}}{2p-1} \tilde{\mu}_{r+2}^{p}
\end{equation}

\noindent
By summing over all possible values of $r$, we get the desired result.
\end{proof}

\begin{lemma}
\label{lemma:4.2}
When $p = \frac{1}{2}$, we have:
\[
\mu_{n}^{\frac{1}{2}}=\frac{n+1}{2^{n}}\delta_{0} + \sum_{r=0}^{n-2} \frac{n-r-1}{2^{n-r}} \tilde{\mu}_{r+2}^{\ahalf}
\]
\end{lemma}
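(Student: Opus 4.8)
The plan is to mimic the proof of Lemma \ref{lemma:4.1} verbatim, carrying out the same decomposition of $\{0,1\}^n$ into stable strings and non-stable strings classified by the parameter $r$, but now specializing every probability weight to $p = \ahalf$. Since each of the $n+1$ stable strings has probability $2^{-n}$, the contribution to $\mu_n^{\ahalf}$ from stable strings is immediately $\frac{n+1}{2^n}\delta_0$, which is exactly the first term. This replaces the geometric-sum identity $\sum_{i=0}^n p^i(1-p)^{n-i} = \frac{p^{n+1}-(1-p)^{n+1}}{2p-1}$, whose right-hand side is a $\frac00$ indeterminate form at $p=\ahalf$; the point is that the left-hand side is perfectly well-defined and evaluates to $(n+1)2^{-n}$.

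Next I would treat the non-stable strings exactly as before: for each $0 \le r \le n-2$, a non-stable string with $r$ entries strictly between its first $0$ and its last $1$ has the form $\omega = (1,\ldots,1,0,x_1,\ldots,x_r,1,0,\ldots,0)$ with $x \in \{0,1\}^r$, and its stabilization time is distributed as $\tilde T_{r+2}^{\ahalf}$ by Lemma \ref{lemma:1.2}'s setup (the middle block $(0,x_1,\ldots,x_r,1)$ lies in $\tilde\Omega_{r+2}^{\ahalf}$, and leading $1$'s and trailing $0$'s do not change stabilization time). The number of ways to choose the leading $1$-block and trailing $0$-block, together with the remaining free bits, is counted by $\sum_{i=0}^{n-r-2} 1$ copies at weight $2^{-n}$ each — more precisely, the analogue of equation (4.2) becomes $\sum_{i=0}^{n-r-2} \frac{1}{2^n}\,\tilde\mu_{r+2}^{\ahalf} = \frac{n-r-1}{2^n}\,\tilde\mu_{r+2}^{\ahalf}$. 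Summing over $r$ from $0$ to $n-2$ gives $\sum_{r=0}^{n-2}\frac{n-r-1}{2^n}\tilde\mu_{r+2}^{\ahalf}$.

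The only discrepancy with the claimed formula is the denominator: the statement has $\frac{n-r-1}{2^{n-r}}$ rather than $\frac{n-r-1}{2^n}$. I would reconcile this by noting that $\tilde\mu_{r+2}^{\ahalf}$ as defined in Lemma \ref{lemma:1.3} is the law on the space $\tilde\Omega_{r+2}^{\ahalf}$ where only the $r$ middle bits are random — so $\tilde\mu_{r+2}^{\ahalf}$ is already a probability measure and the factor accounting for those $r$ free bits is $2^{-r}\sum_{x\in\{0,1\}^r}$, which has total mass $1$; thus when one writes the contribution in terms of the normalized law $\tilde\mu_{r+2}^{\ahalf}$, the correct weight is the probability that a random $\omega\in\{0,1\}^n$ falls into this class, namely (number of leading/trailing configurations)$\times 2^{-n}\times 2^r = (n-r-1)\,2^{r-n} = \frac{n-r-1}{2^{n-r}}$. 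I would double-check this bookkeeping against the $p\neq\ahalf$ formula by sending $p\to\ahalf$ in $\frac{(1-p)p^{n-r}-p(1-p)^{n-r}}{2p-1}$ via L'Hôpital (differentiating numerator and denominator in $p$), which should yield exactly $\frac{n-r-1}{2^{n-r}}$ and thereby confirm both the coefficient and the consistency of the two cases.

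The main obstacle is purely this normalization subtlety — getting the powers of $2$ to match between the "every bit random" picture and the "$\tilde\mu$ is a probability measure on $\tilde\Omega$" picture — and it is resolved by the L'Hôpital consistency check against Lemma \ref{lemma:4.1} rather than by any new idea; the combinatorial skeleton of the argument is identical to the $p\neq\ahalf$ case.
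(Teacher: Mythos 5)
Your proposal is correct and follows essentially the same route as the paper, whose own proof of Lemma \ref{lemma:4.2} simply reruns the decomposition of Lemma \ref{lemma:4.1} at $p=\ahalf$, where the geometric sums become arithmetic ones (indeed $\sum_{i=0}^{n-r-2} p^{i+1}(1-p)^{n-r-i-1}$ collapses to $(n-r-1)2^{-(n-r)}$ term by term). Your intermediate coefficient $\frac{n-r-1}{2^n}$ was a bookkeeping slip (it omits the factor $2^{r}$ from summing over the $2^r$ choices of the middle block $x$), but you correctly diagnose and repair it, and your L'H\^{o}pital cross-check against the $p\neq\ahalf$ formula is a valid confirmation.
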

\begin{proof} 
The proof is the same as above. The only difference is that in the case $p = \ahalf$ the sums we need to evaluate are arithmetic sums, instead of geometric ones. 
\end{proof} 

\begin{notation}
We introduce the notation $C_{r,n}^p$, a set of positive coefficients defined by the results of  Lemma \ref{lemma:4.1} and \ref{lemma:4.2}, so that the 
following holds true for all values of $p$:
\[
\mu_{n}^{p} = C_{0,n}^{p} \delta_0 + \sum_{r=2}^{n} C_{r,n}^{p} \tilde{\mu}_{r}^{p}
\]
\noindent
The reason for doing this is to unify the notation from the cases $p = \ahalf$ and $p \neq \ahalf$. To prove the result we are after, we only need three properties of the coefficients $C_{r,n}^p$. These are proven in the next lemma.
\end{notation}

\begin{lemma}
\label{lemma:4.4}
For each $n$,  let $\theta_n = \lfloor n^\frac{1}{4} \rfloor$. (In fact, any $\theta_n = \lfloor n^\alpha \rfloor$ where $0 < \alpha < \ahalf$ will work). Given $\theta_{n}$ as defined above, we have the following properties of $C_{r,n}^{p}$:
\[  \begin{array}{lccc}
 \text{1.}&  C_{0,n}^p + \displaystyle\sum_{r=2}^{n} C_{r,n}^{p} &=& 1 \\
 \text{2.}& C_{0,n}^p + \displaystyle\sum_{r=2}^{n-\theta_n} C_{r,n}^{p} & \stackrel{n\to\infty}{\longrightarrow} & 0 \\
 \text{3.}& \displaystyle\sum_{r=n - \theta_n+1}^{n} C_{r,n}^{p} & \stackrel{n\to\infty}{\longrightarrow} & 1 
\end{array} \]
 The same properties still hold in the threshold setting where $p = p_n$.
\end{lemma}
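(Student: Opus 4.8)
The plan is to prove the three properties directly from the explicit formulas for $C_{r,n}^p$ given in Lemmas \ref{lemma:4.1} and \ref{lemma:4.2}. Property 1 is the easiest: since $\mu_n^p$ is a probability measure and each $\tilde\mu_r^p$ is a probability measure, and since the formula expresses $\mu_n^p$ as a nonnegative combination of $\delta_0$ and the $\tilde\mu_r^p$, evaluating total mass (i.e.\ pairing with the constant function $1$) immediately gives $C_{0,n}^p + \sum_{r=2}^n C_{r,n}^p = 1$. Alternatively one can sum the geometric/arithmetic series in closed form and check it telescopes to $1$. Note Property 2 and Property 3 together are equivalent given Property 1, so it suffices to establish Property 2.

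For Property 2, I would work with the explicit coefficients. In the case $p\neq\ahalf$ we have $C_{0,n}^p = \frac{p^{n+1}-(1-p)^{n+1}}{2p-1}$ and $C_{r+2,n}^p = \frac{(1-p)p^{n-r}-p(1-p)^{n-r}}{2p-1}$ for $0\le r\le n-2$; assuming WLOG $p>\ahalf$ (the case $p<\ahalf$ being symmetric, though in fact we only need $p\ge\ahalf$), the dominant behavior of $C_{r,n}^p$ is of order $p^{\,n-r}$ up to a bounded factor, so $C_{0,n}^p + \sum_{r=2}^{n-\theta_n} C_{r,n}^p$ is bounded by a constant times $\sum_{j\ge \theta_n} p^{\,j} = O(p^{\theta_n})$, which tends to $0$ since $p<1$ and $\theta_n\to\infty$. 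In the case $p=\ahalf$ we have $C_{0,n}^{1/2} = \frac{n+1}{2^n}$ and $C_{r+2,n}^{1/2} = \frac{n-r-1}{2^{n-r}}$; reindexing by $j = n-r$, the tail $C_{0,n}^{1/2} + \sum_{r=2}^{n-\theta_n} C_{r,n}^{1/2}$ is bounded by a constant times $\sum_{j\ge \theta_n} \frac{j}{2^{j}}$, which again tends to $0$ as $\theta_n\to\infty$. For the threshold setting $p=p_n = \ahalf + \frac{\lambda/2}{\sqrt n}$, I would substitute into the $p\neq\ahalf$ formula and show that the coefficient $C_{n-j,n}^{p_n}$ is still at most a bounded multiple of $(p_n)^{j}$ (here $2p_n - 1 = \lambda/\sqrt n$ appears in the denominator, but it is cancelled by a matching factor $\approx \lambda/\sqrt n$ from the numerator difference of two nearly-equal geometric terms — an application of the mean value theorem or a direct factoring $a^m - b^m = (a-b)\sum a^k b^{m-1-k}$ makes this precise), so that the tail is $O\big(\sum_{j\ge\theta_n} (p_n)^{j}\big)$; since $p_n \le \ahalf + \frac{\lambda/2}{\sqrt n}$ one checks $(p_n)^{\theta_n} \le \exp(-c\,\theta_n + o(\theta_n))$ for $n$ large, using $\log p_n \le -\log 2 + O(1/\sqrt n)$, so this still vanishes because $\theta_n = \lfloor n^{1/4}\rfloor \to \infty$.

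The main obstacle I anticipate is the threshold case: unlike the fixed-$p$ cases, here the ``geometric ratio'' $p_n$ is itself drifting toward $\ahalf$ as $n\to\infty$, so one must be careful that the decay rate does not degrade too fast. The key point is that $p_n$ stays bounded away from $1$ (indeed $p_n \to \ahalf$), so $(p_n)^{\theta_n}\to 0$ as long as $\theta_n\to\infty$ at any rate — this is exactly why the lemma only asks for $\theta_n = \lfloor n^\alpha\rfloor$ with $\alpha>0$ rather than a specific exponent, and why $\alpha<\ahalf$ is imposed elsewhere (that constraint is used later, not here, to guarantee $\theta_n = o(\sqrt n)$ so that shifting the index of $\tilde\mu$ by at most $\theta_n$ does not move the limit distribution). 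I would also double-check the handling of the $2p-1$ denominator in the threshold case by using the finite-sum identity $a^m - b^m = (a-b)\sum_{k=0}^{m-1} a^k b^{m-1-k}$ rather than dividing, which keeps all quantities manifestly nonnegative and avoids any $0/0$ issues as $p_n\to\ahalf$. Once Property 2 is in hand, Property 3 follows by subtracting from Property 1.
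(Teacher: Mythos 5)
Your proposal is correct and follows essentially the same route as the paper: Property 1 from the fact that the $C_{r,n}^p$ partition the probability space, the equivalence of Properties 2 and 3 given Property 1, and geometric (resp.\ arithmetic) tail estimates of order $p^{\theta_n}$ (resp.\ $\theta_n 2^{-\theta_n}$) for Property 2. The only substantive difference is the threshold case, where the paper keeps the closed form $\frac{p_n^{\theta_n+2}}{2p_n-1}$ and evaluates the resulting ``$0/0$'' limit directly, while your factorization $a^m-b^m=(a-b)\sum_{k}a^kb^{m-1-k}$ sidesteps the indeterminacy more cleanly --- just note that it yields $C_{n-j,n}^{p_n}=O\left(j\,p_n^{\,j}\right)$ rather than $O\left(p_n^{\,j}\right)$ as written, which is harmless since $\sum_{j\ge\theta_n} j\,p_n^{\,j}\to 0$ all the same.
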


\begin{proof} Property 1 holds since the $C_{r,n}^p$s arose in Lemma \ref{lemma:4.1} and Lemma \ref{lemma:4.2} as the division of the probability space $\Omega_{n}^{p}$ into disjoint pieces.  Once 1 is established, 2 and 3 are equivalent. To establish 2, we use the definitions of $C_{r,n}^p$ which are found in Lemma \ref{lemma:4.1} and Lemma \ref{lemma:4.2}. For  $p > \ahalf$, we have:

\begin{eqnarray*}
C_{0,n}^p + \sum_{r=2}^{n-\theta_n} C_{r,n}^{p} &=& \frac{p^{n+1} - (1-p)^{n + 1}}{2p-1} + \sum_{r=2}^{n-\theta_n} \frac{(1-p)p^{n-r+2} - p(1-p)^{n-r+2}}{2p-1}\\
& \leq & \frac{p^{n+1}}{2p - 1} \cdot 1 + \frac{1-p}{2p-1} \sum_{r=2}^{n-\theta_n} p^{n-r+2} \cdot 1\\
&=& \frac{p^{\theta_n+2}}{2p-1} \stackrel{n \to \infty}{\longrightarrow} 0
\end{eqnarray*}
\noindent
The last line holds since $\theta_n \to \infty$ as $n\to\infty$.  This estimate also works to see that $C_{0,n}^{p_n} + \displaystyle\sum_{r=2}^{n-\theta_n} C_{r,n}^{p_n} \to 0$  as $n \to \infty$ in the threshold setting $p_n = \ahalf + \frac{\lambda / 2}{\sqrt{n}}$. In this case, $\frac{p_n^{\theta_n+2}}{2p_n-1}$ is an indeterminant ``$0/0$" limit. Elementary methods, however, show that the limit is zero when $\theta_n$ is a power of $n$ as we have chosen here.  Finally, when $p = \ahalf$, we have:

\begin{eqnarray*}
 C_{0,n}^p + \sum_{r=2}^{n-\theta_n} C_{r,n}^{p} &=& \frac{n+1}{2^{n}}+\sum_{r=2}^{n-\theta_n}\frac{n-r+1}{2^{n-r+2}} \\
 & = & \frac{n+1}{2^n}+\left(\frac{\theta_n+2}{2^{\theta_n+1}}-\frac{n+1}{2^{n}} \right) \stackrel{n \to \infty}{\longrightarrow} 0
\end{eqnarray*}

\end{proof}

\begin{proposition}
\label{proposition:4.5}
The random variable $T_n^{p}$ converges in the same way that $\tilde{T}_{n}^{p}$ converges as established in Lemma \ref{lemma:1.3}. That is $(T_{n}^{p} - pn)/\sqrt{n} \To N(0,p(1-p))$ for $p>\ahalf$, $(T_{n}^{p} - \ahalf n)/\sqrt{n} \To \halfchithree$ for $p = \ahalf$ and $(T_{n}^{p_n} - \ahalf n)/\sqrt{n} \To \nu_\lambda$ in the threshold setting where $p=p_n$.
\end{proposition}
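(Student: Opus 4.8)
\emph{Proof proposal.} The plan is to fix a bounded continuous test function $f$ and push the mixture identity $\mu_{n}^{p} = C_{0,n}^{p}\delta_{0} + \sum_{r=2}^{n} C_{r,n}^{p}\tilde{\mu}_{r}^{p}$ (from the Notation preceding Lemma \ref{lemma:4.4}, i.e.\ Lemmas \ref{lemma:4.1} and \ref{lemma:4.2}) through the centering-and-scaling map. Write $c_{n}$ for the centering constant, so $c_{n} = pn$ when $p > \ahalf$ and $c_{n} = \ahalf n$ when $p = \ahalf$ or in the threshold setting, and let $\rho_{r,n}$ denote the law of $(\tilde{T}_{r}^{p}-c_{n})/\sqrt{n}$. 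Then
\[
\e\!\left[f\!\left(\frac{T_{n}^{p}-c_{n}}{\sqrt{n}}\right)\right] = C_{0,n}^{p}\, f\!\left(\frac{-c_{n}}{\sqrt{n}}\right) + \sum_{r=2}^{n-\theta_{n}} C_{r,n}^{p}\int f\, d\rho_{r,n} + \sum_{r=n-\theta_{n}+1}^{n} C_{r,n}^{p}\int f\, d\rho_{r,n}.
\]
By Property 2 of Lemma \ref{lemma:4.4} the first two groups of terms are bounded in absolute value by $\|f\|_{\infty}\bigl(C_{0,n}^{p}+\sum_{r=2}^{n-\theta_{n}}C_{r,n}^{p}\bigr)\to 0$, and by Property 3 the remaining weights sum to $1+o(1)$. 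Hence it suffices to prove $\int f\, d\rho_{r,n}\to\int f\, d\nu$ \emph{uniformly} over $r$ in the range $n-\theta_{n}+1\le r\le n$, where $\nu$ is the claimed limit law ($N(0,p(1-p))$, $\halfchithree$, or $\nu_{\lambda}$, as appropriate).

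For that uniform convergence I would observe that $\rho_{r,n}$ is the image of the law of $(\tilde{T}_{r}^{p}-c_{r})/\sqrt{r}$ under the affine map $x\mapsto \sqrt{r/n}\,x + (c_{r}-c_{n})/\sqrt{n}$. In the relevant range $n-r\le\theta_{n}-1 = O(n^{1/4}) = o(\sqrt{n})$, so $(c_{r}-c_{n})/\sqrt{n}\to 0$ and $\sqrt{r/n}\to 1$, both uniformly in $r$; thus these affine maps converge uniformly to the identity. Since Lemma \ref{lemma:1.3} gives $(\tilde{T}_{m}^{p}-c_{m})/\sqrt{m}\To\nu$ as $m\to\infty$, a Slutsky/``converging together'' argument shows that for \emph{any} sequence $r(n)$ with $n-\theta_{n}+1\le r(n)\le n$ one has $\rho_{r(n),n}\To\nu$ (note $r(n)\to\infty$ along with $n$), and a routine subsequence argument upgrades this to $\sup_{r}\bigl|\int f\, d\rho_{r,n}-\int f\, d\nu\bigr|\to 0$. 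Equivalently, one can run the whole estimate in the bounded-Lipschitz metric $\beta$, using $\beta\bigl(\sum\alpha_{i}\mu_{i},\sum\alpha_{i}\nu_{i}\bigr)\le\sum\alpha_{i}\beta(\mu_{i},\nu_{i})$ together with Property 1 (the weights are a probability vector) and the uniform boundedness of $\beta$ on probability measures.

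The main obstacle is the threshold setting, where the success parameter $p_{n}=\ahalf+\frac{\lambda/2}{\sqrt{n}}$ does not vary with $r$: a length-$r$ substring then carries probability $\ahalf+\frac{(\lambda\sqrt{r/n})/2}{\sqrt{r}}$, i.e.\ it sits in the threshold regime \emph{for length $r$} but with the perturbed drift $\lambda_{r,n}:=\lambda\sqrt{r/n}$. Since $\lambda_{r,n}\to\lambda$ uniformly over the good range, I would need the (routine but not literally stated) strengthening of Lemmas \ref{lemma:3.5} and \ref{lemma:3.7} that $(\tilde{T}_{r}^{p_{n}}-\ahalf r)/\sqrt{r}\To\nu_{\lambda}$ whenever the drift parameter tends to $\lambda$; the generator computation in the proof of Lemma \ref{lemma:3.5} goes through verbatim with $\lambda$ replaced by a convergent sequence $\lambda_{r,n}$, giving $L^{n}(\cdot)\To B^{\lambda}(\cdot)$, and the density in Lemma \ref{lemma:3.7} depends continuously on the drift. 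Granting this, the argument of the previous paragraph applies uniformly and yields $(T_{n}^{p_{n}}-\ahalf n)/\sqrt{n}\To\nu_{\lambda}$, completing the last case.
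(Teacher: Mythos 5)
Your proof is correct and follows the same skeleton as the paper's: split the mixture $\mu_{n}^{p}=C_{0,n}^{p}\delta_{0}+\sum_{r}C_{r,n}^{p}\tilde{\mu}_{r}^{p}$ at $r=n-\theta_{n}$, discard the small-$r$ mass via Property 2 of Lemma \ref{lemma:4.4}, and control the large-$r$ block using the special-case convergence of Lemma \ref{lemma:1.3}. The differences are in the bookkeeping and in one point of care. The paper runs the argument through distribution functions at continuity points, and therefore needs a separate step (its Claim 3) showing that the discrepancy between the sets $\sqrt{n}A+\ahalf n$ and $\sqrt{r}A+\ahalf r$ is an interval of asymptotically negligible $\tilde{\mu}_{r}^{p}$-measure, which is where the no-atoms property of the limit law is used. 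You instead test against bounded continuous $f$ (equivalently, the bounded-Lipschitz metric) and absorb that same discrepancy into a Slutsky argument: $\rho_{r,n}$ is the pushforward of the law of $(\tilde{T}_{r}^{p}-c_{r})/\sqrt{r}$ under affine maps converging uniformly to the identity because $n-r=O(n^{1/4})=o(\sqrt{n})$. These are equivalent in substance; your version dispenses with the atom argument but needs the subsequence/uniformity step, which you supply. The one place where you are genuinely more careful than the paper is the threshold setting: the paper's Claim 2 invokes convergence of $(\tilde{T}_{r}^{p_{n}}-\ahalf r)/\sqrt{r}$ with mismatched indices ($r\ne n$), which is not literally what Lemmas \ref{lemma:3.5} and \ref{lemma:3.7} prove, since a length-$r$ string with parameter $p_{n}$ sits in the threshold regime for length $r$ with drift $\lambda\sqrt{r/n}$ rather than $\lambda$. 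Your observation that the generator computation and the density formula are continuous in the drift parameter, so the conclusion holds for any drift sequence tending to $\lambda$, is exactly the strengthening needed to make this step airtight, and it is correct.
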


\begin{proof}
The proof of the three separate settings, ($p=\ahalf$, $p \neq \ahalf$, and the threshold setting $p=p_n$ ) are all handled by the same argument. To be concrete, we will focus on the threshold setting and show $(T_{n}^{p_n} - \ahalf n)/\sqrt{n} \To \nu_\lambda$. The argument proceeds in the same way for all three settings because Lemma \ref{lemma:4.4} holds for all three settings. 

We use the following characterization of weak convergence for distributions
$\rho_{n}$: $\rho_{n}\To\rho$ if and only if $\rho_{n}\left((-\infty,a]\right)\to\rho\left((-\infty,a]\right)$
for every $a\in\mathbb{R}$ with $\rho(\{a\})=0$. (see e.g. \cite{B00} example
2.3 pg 18.) Since $\nu_\lambda$ has no atoms, our aim is to show that for every $a \in \mathbb{R}$:
\[
\left|\p\left(\frac{T^{p_n}_n-\ahalf n}{\sqrt{n}}\in \left( \infty, a\right]\right)-\nu_\lambda\left( \left(-\infty,a\right] \right)\right| \stackrel{ n \to \infty}{\longrightarrow} 0
\]

\noindent
For the remainder of the proof, fix an arbitrary $a \in \mathbb{R}$ and for convenience denote $A=(-\infty,a]$. We proceed by dividing into three terms using the triangle inequality and the set-algebra identity that $\sqrt{n}A+\ahalf n  = \left(\sqrt{r}A+\ahalf r\right)\dot{\bigcup}\left(\sqrt{r}a+\ahalf r,\sqrt{n}a+\ahalf n\right]$ which holds for every $r \in [0,n]$:

\begin{eqnarray*}
\left|\p\left(\frac{T^{p_n}_n-\ahalf n}{\sqrt{n}}\in A\right)-\nu_\lambda(A)\right| & = & \left|\left(\mu^{p_n}_n\right)\left(\sqrt{n}A+\ahalf n\right)-\nu_\lambda(A)\right|\\
&=& \left|\left(C^{p_n}_{0,n}\delta_0 + \sum_{r=2}^{n} C^{p_n}_{r,n}\tilde{\mu}^{p_n}_{r} \right)\left(\sqrt{n}A+\ahalf n\right)-\nu_\lambda(A)\right|\\
&\leq & \left|\left(C^{p_n}_{0,n}\delta_0 + \sum_{r=2}^{n-\theta_n} C^{p_n}_{r,n}\tilde{\mu}^{p_n}_{r} \right)\left(\sqrt{n}A+\ahalf n\right)\right|\\
& & + \left|\left(\sum_{r= n -\theta_n + 1}^{n} C^{p_n}_{r,n}\tilde{\mu}^{p_n}_{r} \right)\left(\sqrt{r}A+\ahalf r\right)-\nu_\lambda(A)\right|\\
& & + \left|\left(\sum_{r=n -\theta_n + 1}^{n} C^{p_n}_{r,n}\tilde{\mu}^{p_n}_{r} \right)\left(\sqrt{r}a+\ahalf r,\sqrt{n}a+\ahalf n\right]\right|
\end{eqnarray*}
\noindent
We now show that each term individually goes to zero as $n \to \infty$. Each term is handled in a separate claim.

\begin{claim1}
\[
\left|\left(C^{p_n}_{0,n}\delta_0 + \sum_{r=2}^{n-\theta_n} C^{p_n}_{r,n}\tilde{\mu}^{p_n}_{r} \right)\left(\sqrt{n}A+\ahalf n\right)\right| \stackrel{n\to\infty}{\longrightarrow} 0
\]
\end{claim1}
\begin{claimpf}
This is immediate from the second property in Lemma \ref{lemma:4.4}:

\[
\left|\left(C^{p_n}_{0,n}\delta_0 + \sum_{r=2}^{n-\theta_n} C^{p_n}_{r,n}\tilde{\mu}^{p_n}_{r} \right)\left(\sqrt{n}A+\ahalf n\right)\right| \leq \left| C^{p_n}_{0,n} \cdot 1 + \sum_{r=2}^{n-\theta_n} C^{p_n}_{r,n} \cdot 1 \right| \stackrel{n\to\infty}{\longrightarrow} 0
\]

\hfill $\square$
\end{claimpf}

\begin{claim2}
\[
\left|\left(\sum_{r= n -\theta_n + 1}^{n} C^{p_n}_{r,n}\tilde{\mu}^{p_n}_{r} \right)\left(\sqrt{r}A+\ahalf r\right)-\nu_\lambda(A)\right| \stackrel{n\to\infty}{\longrightarrow} 0
\]
\end{claim2}
\begin{claimpf}
This follows from the results of Lemma \ref{lemma:4.4} and from the weak convergence for $\tilde{T}_n$, namely $\frac{\tilde{T}^{p_n}_n - \ahalf n}{\sqrt{n}} \To \nu_\lambda$. Fix $\epsilon > 0$. We use the weak convergence of $\tilde{T}^{p_n}_n$ applied to the set $A$ to find an $N \in \mathbb{N}$ so large so that $\left|\tilde{\mu}^{p_n}_{r}(\sqrt{r}A + \ahalf r) -\nu_\lambda(A)\right| < \epsilon$ for all $r > N-\theta_N+1$. Then, for all $n > N$ we have that:

\begin{eqnarray*}
\left|\sum_{n -\theta_n + 1}^{n} C^{p_n}_{r,n}\tilde{\mu}^{p_n}_{r} \left(\sqrt{r}A+\ahalf r\right)-\nu_\lambda(A)\right| &\leq & \left|\sum_{n -\theta_n + 1}^{n} C^{p_n}_{r,n} \left( \tilde{\mu}^{p_n}_{r}\left(\sqrt{r}A+\ahalf r\right)-\nu_\lambda(A)\right)\right|\\
 & & + \left| \left( C^{p_n}_{0,n} + \sum_{ 2}^{n-\theta_n} C^{p_n}_{r,n}\right) \nu_\lambda(A)\right|\\
 &\leq & \left(\sum_{n-\theta_n+1}^{n} C^{p_n}_{r,n}\right) \epsilon +  \left( C^{p_n}_{0,n} + \sum_{ 2}^{n-\theta_n} C^{p_n}_{r,n}\right) \nu_\lambda(A)\\
 &\to& 1\cdot \epsilon + 0 \cdot \nu_\lambda(A) = \epsilon 
\end{eqnarray*}

\noindent
where the limits are from Lemma \ref{lemma:4.4}. Since $\epsilon$ arbitrary, we have the result of the claim. \hfill $\square$
\end{claimpf}

\begin{claim3}
\[
\left|\left(\sum_{r=n -\theta_n + 1}^{n} C^{p_n}_{r,n}\tilde{\mu}^{p_n}_{r} \right)\left(\sqrt{r}a+\ahalf r,\sqrt{n}a+\ahalf n\right]\right| \stackrel{n\to\infty}{\longrightarrow} 0
\]
\end{claim3}
\begin{claimpf}
Fix an $\epsilon > 0$. Since the measure $\nu_\lambda$ has no atoms, choose a $\delta > 0$ so small so that $\nu_\lambda\left(a,a+\delta\right] < \epsilon$. Now, since $\frac{\tilde{T}^{p_n}_n - \ahalf n}{\sqrt{n}} \To \nu_\lambda$, choose $N$ so large so that for all $n > N-\theta_N + 1$ we have $ \left| \p \left( (\tilde{T}^{p_n}_n - \ahalf n) / \sqrt{n} \in \left(a,a+\delta \right] \right) -\nu_\lambda \left(a,a+\delta\right] \right| < \epsilon$. Now choose $M$ so large, so that the following holds whenever $n > M$ and $n \geq r \geq n-\theta_n+1$:
\[
a\frac{\sqrt{n} - \sqrt{r}}{\sqrt{r}} + \ahalf \frac{n-r}{\sqrt{r}} < \delta
\]
Such an $M$ always exists because:
\begin{eqnarray*}
a\frac{\sqrt{n} - \sqrt{r}}{\sqrt{r}} + \ahalf \frac{n-r}{\sqrt{r}} &\leq & |a| \left( \sqrt{\frac{1}{1-\frac{\theta_n}{n}}} - 1 \right) + \ahalf \frac{\theta_n}{\sqrt{n-\theta_n}} \\
&\approx& |a| \frac{1}{2} \left(\frac{\theta_n}{n}\right) + \ahalf \left(\frac{\theta_n}{\sqrt{n}}\right) \to 0 \text{ since } \frac{\theta_n}{\sqrt{n}} \to 0
\end{eqnarray*}
\noindent

For $n$ larger than both $N$ and $M$, we will have then for every $r$ with $n \geq r \geq n-\theta_n+1$ that:
\begin{eqnarray*}
\tilde{\mu}^{p_n}_{r}\left(\sqrt{r}a+\ahalf r,\sqrt{n}a+\ahalf n \right] &=& \p \left( \frac{\tilde{T}^{p_n}_r - \ahalf r}{\sqrt{r}} \in \left(a,a+\left(a\frac{\sqrt{n} - \sqrt{r}}{\sqrt{r}} + \ahalf \frac{n-r}{\sqrt{r}}\right)\right] \right)\\
 &\leq & \p \left( \frac{\tilde{T}^{p_n}_r - \ahalf r}{\sqrt{r}} \in \left(a,a+\delta\right] \right)\\
 &\leq & \nu_\lambda \left(a,a+\delta \right] + \epsilon \leq 2\epsilon
\end{eqnarray*}
\noindent
Hence, for such $n$ we have: $\left|\left(\sum_{r=n -\theta_n + 1}^{n} C^{p_n}_{r,n}\tilde{\mu}^{p_n}_{r} \right)\left(\sqrt{r}a+\ahalf r,\sqrt{n}a+\ahalf n\right]\right| \leq 1\cdot2 \epsilon$.
\end{claimpf}

\end{proof}

\section{Acknowledgments}
 The second author would like to thank Professor S.R.S. Varadhan for helpful discussions that led to the development and proof of the calculation in the threshold setting.

Jacob Funk

Department of Pure Mathematics, University of Waterloo,

Waterloo, Ontario N2L 3G1, Canada

Email: jjfunk@uwaterloo.ca

\vspace{8pt}

Mihai Nica

Courant Institute of Mathematical Sciences

New York University

251 Mercer Street
New York, N.Y. 10012-1185
Mail Code: 0711

Email: nica@cims.nyu.edu

\vspace{8pt}

Michael Noyes

Department of Pure Mathematics, University of Waterloo,

Waterloo, Ontario N2L 3G1, Canada

Email: mnoyes@uwaterloo.ca

\end{document}